\documentclass[11pt]{article}
\usepackage{amsmath,amssymb,mathrsfs}
\usepackage{color}

\usepackage[colorlinks,citecolor=blue,urlcolor=blue]{hyperref}

\newtheorem{Theorem}{Theorem}[part]

\newtheorem{Assumption}{Assumption}[part]

\newtheorem{Corollary}{Corollary}[part]
\newtheorem{Remark}{Remark}[part]

\DeclareMathOperator*{\essinf}{ess\,inf}

\DeclareMathOperator*{\esssup}{ess\,sup}

\def\cal#1{\mathcal{#1}}

\def\v{\varphi}

\def\b{\beta}

\def\L{\Lambda}

\def\reff#1{{\rm(\ref{#1})}}

\addtolength{\oddsidemargin}{-0.1 \textwidth}
\addtolength{\textwidth}{0.2 \textwidth}
\addtolength{\topmargin}{-0.1 \textheight}
\addtolength{\textheight}{0.2\textheight}

\def\Gc{{\cal G}}

\def\L{{\cal L}}

\def\T{{\cal T}}

\def\Dzw1#1{\frac{\partial^2 #1}{\partial z \partial w_1}}

\def\Dzb1#1{\frac{\partial^2 #1}{\partial z \partial b_1}}

\newcommand{\dproof}{\noindent {Proof.} \quad}
\newcommand{\fproof}{\hfill $\square$ \bigskip}

\newtheorem{definition}{Definition}[section]

\newtheorem{theorem}[definition]{Theorem}

\newtheorem{proposition}[definition]{Proposition}
\newtheorem{lemma}[definition]{Lemma}

\def\1B{\text{1\!\!I}}

%%%%%%%%%%%%%%%%%%%

\def\FB{\mathbb{F}}

\def\PB{\mathbb{P}}

\def\NB{\mathbb{N}}

\def\1B{\text{1\!\!I}}

%autres

\def\reff#1{{\rm(\ref{#1})}}

     % Norm
     % absolute value

\def \be{\begin{eqnarray}}
\def \ee{\end{eqnarray}}
\def \b*{\begin{eqnarray*}}
\def \e*{\end{eqnarray*}}

\def\P{\mathbb{P}}

\begin{document}

%\title{BSDEs with weak constraints at any stopping time}%with $g$-conditional expectation}
\title{A new Mertens decomposition of $\mathscr{Y}^{g,\xi}$-submartingale systems. Application to BSDEs with weak constraints at stopping times  \footnote{This work was conducted while the third author was visiting the National University of Singapore, whose hospitality is kindly acknowledged. The forth author also acknowledges the hospitality of Le Mans University during his visit.} }% and constraints at any stopping time}%with $g$-conditional expectation}
\author{Roxana Dumitrescu
\thanks{Department of Mathematics, King's College London, United Kingdom, email: {\tt roxana.dumitrescu@kcl.ac.uk}.}
\and
Romuald Elie
\thanks{LAMA, 
Universit\'e Gustave Eiffel, CNRS, France, 
email: {\tt romuald.elie@univ-mlv.fr }}
\and Wissal Sabbagh\thanks{Laboratoire Manceau de Math\'{e}matiques, Institut du Risque et de l'Assurance, Le Mans Universit\'e, France. email: {\tt wissal.sabbagh@univ-lemans.fr}. The author's research is part of the ANR project DREAMeS (ANR-21-CE46-0002).} \and Chao Zhou \thanks{Department of Mathematics, National University of Singapore, Singapore,  {\tt matzc@nus.edu.sg.} Research supported by Singapore MOE AcRF Grants R-146-000-219-112 and R-146-000-255-114.}}

\maketitle
\begin{abstract}
We first introduce the concept of $\mathscr{Y}^{g,\xi}$-submartingale systems, where the nonlinear operator $\mathscr{Y}^{g,\xi}$ corresponds to the first component of the solution of a reflected BSDE with generator $g$ and lower obstacle $\xi$. We first show that, in the case of a left-limited right-continuous obstacle, any $\mathscr{Y}^{g,\xi}$-submartingale system can be aggregated by a process which is right-lower semicontinuous. We then prove a \textit{Mertens decomposition}, by using an original approach which does not make use of the standard penalization technique. These results are in particular useful for the treatment of control/stopping game problems and, to the best of our knowledge, they are completely new in the literature. As an application, we introduce a new class of \textit{Backward Stochastic Differential Equations (in short BSDEs) with weak constraints at stopping times}, which are related to the partial hedging of American options. We study the wellposedness of such equations and, using the $\mathscr{Y}^{g,\xi}$-Mertens decomposition, we show that the family of minimal time-$t$-values $Y_t$, with $(Y,Z)$ a supersolution of the BSDE with weak constraints, admits a representation in terms of a reflected backward stochastic differential equation.

%In particular, we extend the Doob Meyer decomposition of Mertens  \cite{M72} for classical supermatingales, as well as Peng's \cite{peng:99} version for right-continuous  $g$-supermatingales, and \cite{BPTZ15} for general $g$-supermatingales. Thanks to this decomposition the upper value of a stochastic control/optimal stopping game can be chaecterized via a class of BSDEs.\\
%In the second part, we introduce a new class of \textit{Backward Stochastic Differential Equations with weak reflections} 
%whose solution $(Y,Z)$ satisfies the weak constraint $\mathbb{E}[\Psi(\theta,Y_\theta)] \geq m,$ for all stopping time $\theta$ taking values between $0$ and a terminal time $T$, where $\Psi$ is a random non-decreasing map and $m$ a given threshold. 
%We study the wellposedness of such equations and  show that the family of minimal time-$t$-values $Y_t$ (where $(Y,Z)$ is a supersolution of the BSDE with weak reflections) can be aggregated by a right-continuous process. As application of the decomposition proved in the first part, we obtain a representation of the process of minimal time-$t$-values $Y_t$. 
%Moreover, we show that this process can be written as a \textit{stochastic control/optimal stopping game}, which is shown to admit, under appropriate assumptions, a value and  saddle points. 
%From a financial point of view, this problem is related to the approximative hedging for American options.

\end{abstract}

\vspace{10mm}

\noindent{\bf Key words~:}  Mertens decomposition, BSDEs with weak constraints at stopping times, %Partial hedging,  
Optimal control, Optimal stopping, Stochastic game, Stochastic target.

\vspace{10mm}

\noindent{\bf AMS 1991 subject classifications~:} 93E20, 60J60, 47N10.

%%%%%%%%%%%%%%%%%%%%
%%%%%%%%%%%%%%%%%%%%
%%%%%%%%%%%%%%%%%%%%
%\input{introduction.tex}
%%%%%%%%%%%%%%%%%%%%
%%%%%%%%%%%%%%%%%%%%
%%%%%%%%%%%%%%%%%%%%

\section{Introduction}

\noindent The Doob-Meyer decomposition represents a fundamental tool of the general theory of processes,  in particular when applied to optimal control. This result, first introduced in \cite{M72} for classical supermatingales, has been further extended in \cite{P97} to a class of right-continuous $g$-supermatingales, being used by many authors in various contexts: backward stochastic differential equation with constraints \cite{BEM18,KLIMSIAK15,PX10}, minimal supersolutions under non-classical conditions on the driver \cite{DHK13,HKM14}, backward stochastic differential equations with weak terminal conditions \cite{BER15}.  More recently, a version \`a la Mertens of the Doob-Meyer decomposition has been provided in \cite{BPT16} and \cite{GIOOQ171} for $g$-supermatingales, which are not necessarily right-continuous, a natural application of their decomposition being to the general duality for the minimal super-solution of a BSDEs with constraint. 

In this paper, we introduce the notion of $\mathscr{Y}^{g,\xi}$-submartingale processes, with  $\mathscr{Y}^{g,\xi}$ the nonlinear operator associated to the first component of the solution of a reflected backward stochastic differential equation with lower obstacle $\xi$ and driver $g$, and provide a Doob-Meyer-Mertens decomposition for such processes. To the best of our knowledge, this result is completely new  in the literature, even in the classical case of a driver $g \equiv 0$.  To illustrate the main result, we consider the simpler setting of right-continuous left-limited strong  $\mathscr{Y}^{g,\xi}$-submartingale processes, the associated $\mathscr{Y}^{g,\xi}$-Doob-Meyer decomposition being new as well.   More precisely, let $(\Omega, \mathscr{F}, \mathbb{P})$ be a probability space, equipped with a {\color{black}{d-dimensional}} Brownian motion $W$, as well as the Brownian filtration $\mathbb{F}:=(\mathscr{F}_t)_{0 \leq t \leq T}$. Let $g:[0,T] \times \Omega \times \mathbf{R} {\color{black}{\times \mathbf{R}^d}} \mapsto \mathbf{R}$ be some function, progressively measurable in $(t,\omega)$ and Lipschitz in $(y,z)$, also called \textit{driver}, $\xi$ a right-continuous left-limited process  and $\eta \in \textbf{L}_2(\mathscr{F}_\tau)$, with $\eta \geq \xi_\tau$, where $\tau$ is a stopping time. We define $\mathscr{Y}_{\cdot,\tau}^{g,\xi}[\eta]:=Y_\cdot$, where $(Y,Z,A)$ is the unique solution of the reflected backward stochastic differential equation
\begin{align*}
-dY_t=g(t,Y_t,Z_t)dt-Z_tdW_t+dA_t \text{\,\,on\,\,} [0,\tau]; \nonumber\\
Y_t \geq \xi_t; \nonumber\\
{\color{black}{Y_{\tau} =\eta}};\nonumber\\
\int_0^T(Y_{t^-}-\xi_{t^-})dA_t=0 \text{\, a.s.}
\end{align*}
Then, an optional process $\textbf{X}$ is called a \textit{strong} $\mathscr{Y}^{g,\xi}$-\textit{submartingale} process if $\textbf{X}_\sigma \geq \xi_\sigma$, for all stopping times $\sigma$ and such that $\textbf{X}_S \leq \mathscr{Y}_{S,\tau}^{g,\xi}[\mathbf{X}_\tau]$ a.s. on $S \leq \tau$, for all stopping times $S, \tau$. When the process $\textbf{X}$ is right-continuous, we show that it admits the unique Doob-Meyer decomposition 
\begin{align}\label{Doob}
-d\textbf{X}_t=g(t,\textbf{X}_t,Z^X_t)dt-Z^X_tdW_t+dA^X_t-dK^X_t \text{\,\,on\,\,} [0,\tau]; \nonumber\\
\textbf{X}_t \geq \xi_t; \nonumber\\
\int_0^T(\textbf{X}_{t^-}-\xi_{t^-})dA_t=0, \text{\, a.s.}
\end{align}
where $A^X$ and $K^X$ are {\color{black}{non-decreasing}} right-continuous left-limited processes such that $A^X_0=0$ and $K^X_0=0$.

We give a general regularity result for strong $\mathscr{Y}^{g,\xi}$-submartingale processes, in particular we show that they are right-lower semicontinuous, and provide a \textit{Mertens version} of the $\mathscr{Y}^{g,\xi}$-Doob-Meyer decomposition $\eqref{Doob}$. Our approach is original, avoiding the standard penalization technique adopted in the context of classical $\mathcal{E}^g$-supermartingales. In particular, it is important to notice that, to get the decomposition, we do not require the right-continuity, which might be difficult to prove, the right-lower semicontinuity which comes from the $\mathscr{Y}^{g,\xi}$-submartingale property being enough. Our method also allows to obtain the existence of left and right limits of $\mathscr{Y}^{g,\xi}$-submartingales, without the need of establishing a down-crossing inequality which is the usual approach for $\mathcal{E}^g$-supermartingales (see e.g. \cite{peng:99}, \cite{BPT16}). Furthermore, our results are given in the more general setting of $\mathscr{Y}^{g,\xi}$-submartingale systems, which naturally appear in control theory. Using tools from the general theory of processes, we show that such a system can be aggregated by a right-lower semicontinuous optional process, which admits the  \textit{\`a la Mertens version} of the Doob-Meyer decomposition. Such a decomposition seems as fundamental as the Mertens decomposition of standard $\mathcal{E}^g$-supermartingales, being useful in various contexts, in particular for the treatment of control-stopping game problems.\\

We illustrate an application of our decomposition to the study of a new class of \textit{BSDEs with weak constraints} at any stopping time, which are related to the partial hedging of American options. This class of BSDEs extends the so-called \textit{BSDEs with weak terminal condition} introduced in \cite{BER15}, by allowing for weak constraints at any stopping time. The Doob-Meyer-Mertens decomposition of $\mathscr{Y}^{g,\xi}$-submartingales is a key ingredient for the dynamic characterization of the family of minimal time $\tau$ values $Y_\tau$ such that $(Y,Z)$ is a supersolution of the \textit{BSDE with weak constraints}, that is it satisfies, for $0 \leq t \leq T$, 
\begin{align}\label{Dynamic_1}
Y_t^Z \geq Y_0-\int_0^t g(s,Y_s^Z,Z_s)ds+\int_0^tZ_sdW_s; 
\end{align}
\begin{align}\label{weak_cond_1}
\mathbb{E}[\ell(Y_{\tau}^{Z}-L_{\tau})]\geq m, \,\,\, \PB-a.s.\,\text{for all  stopping time } \tau \text{ taking values in }[0,T],
\end{align}
{\color{black}where $m$ is a given threshold and $\ell$ is a nondecreasing map.}\\
The problem at time $0$ reads as follows:
\begin{align}\label{prob}
    \text{Find the minimal } Y_0 \text{ such that } \eqref{Dynamic_1} \text{ and } \eqref{weak_cond_1} \text{ are satisfied for some } Z.
\end{align}
From a practical point of view, the cost of superhedging is fairly  too high, so that the option seller needs to accept to take some risk. An approach, which has been mainly developed for European options, consists in replacing the too strong super-replicating $\P$-a.s. condition by a weaker one (see e.g. \cite{BET10}, \cite{BER15}, \cite{D16}). By analogy, in the case  of American options, the seller needs to solve a BSDE with dynamics \eqref{Dynamic_1}, but shall replace the too strong constraint $Y_\tau \geq L_\tau \P$-a.s.\,\text{for all  stopping time }  $\tau$ \text{ taking values in }$[0,T]$  by the weaker \eqref{weak_cond_1}, where $m$ stands for a given success threshold and $\ell$ represents a non-decreasing loss function. In contrast with the case of European Options, the literature on the partial hedging of American options and Bermudean type options is much less abundant. Related works are \cite{BBC16} (where the authors propose  a probabilistic numerical algorithm for the computation of the quantile hedging of Bermudean options) and \cite{BEH16} ( which follows a very different approach to study BSDEs of the form \reff{Dynamic_1} together with a weaker version of \reff{weak_cond_1} where the constraint only holds for deterministic times valued in $[0,T]$). In the framework of the latter paper, no dynamic programming principle is available and the derived solution connects to stochastic differential equations of McKean-Vlasov type.

The first step in our analysis consists in the reformulation of \eqref{prob}
as a control-stopping problem. To this purpose, we show first that if $Y$ and $Z$ are such that $\eqref{weak_cond_1}$ is satisfied, then we can find an admissible control process $\alpha$ such that
\begin{align}\label{strong}
\Psi(\theta, Y_\theta) \geq M_\theta^\alpha:=m+\int_0^\theta \alpha_s {\color{black}dW_s}, \text{\,\, for all stopping times\,\,} \theta,
\end{align}
{\color{black}where $m\in\mathbb R$ and $\Psi$ is a (possibly random) nondecreasing real-valued map\footnote{In fact, we can take $\Psi(\tau, Y_\tau)= \ell(Y_{\tau}^{Z}-L_{\tau})$.}.}\\
In the context of a terminal constraint only, the existence of such a control process $\alpha$ is simply obtained by the Martingale Representation Theorem (see e.g. \cite{BER15}). In our setting, since we have to deal with weak constraints at any stopping time, the existence of a control such that $\eqref{strong}$ holds  is not obvious and more sophisticated arguments are needed to prove it. Using this result, we show that the solution of \eqref{prob} can be written as
\begin{align}\label{control-stopping}
    \inf_\alpha \sup_\theta \mathcal{E}_{0,\theta}^{g}[\Phi(\theta,M_\theta^\alpha)],
\end{align}
{\color{black}with $\Phi$ the left-continuous inverse of the non-decreasing map $\Psi$.}
Equivalently, using the link between the solution of reflected BSDEs and  nonlinear optimal stopping with $g$-expectations, the above control-stopping problem can be rewritten under the form
\begin{align}
    \inf_\alpha \mathscr{Y}_{0,T}^{g, \alpha}[\Phi(T,M_T^\alpha)],
\end{align}
where the nonlinear operator $\mathscr{Y}^{g, \alpha}$ is associated to the obstacle process $\Phi(\cdot, M^\alpha_\cdot)$.

Our aim is to study, for a threshold process $M^\alpha_\cdot$, the family of minimal $\tau$-values $Y_\tau$, which is given by
\begin{align}
\mathcal{Y}^\alpha(\tau)= \essinf \{Y_\tau^{\alpha'}, \,\, \alpha'=\alpha  {\rm \,\, on \,\,} [\![0,\tau]\!]\}.
\end{align}
We  derive a dynamic programming principle for this family, from which we deduce that  $(\mathcal{Y}^\alpha(\tau))_{\tau}$ is a $\mathscr{Y}^{g,\alpha}$-submartingale system which can be aggregated by a right-continuous with left limits process $\mathcal{Y}_\cdot^\alpha$.  Taking advantage of the  $\mathscr{Y}^{g,\xi}$-Mertens-Doob-Meyer decomposition proved in the first part of the paper, we show that the value process $\mathcal{Y}_\cdot^\alpha$ corresponds to the unique solution of a specific \textit{reflected backward stochastic differential equation}. 
Finally, we provide some complementary results related to the control-stopping game problem $\eqref{control-stopping}$, and give some sufficient conditions under which the game problem admits a value and a saddle-point.

\vspace{3mm}

\noindent The outline of the paper is the following: In Section \ref{decomp}, we introduce the $\mathscr{Y}^{g,\xi}$-submartingale systems, show that they can be aggregated by right-lower semicontinuous processes and provide the $\mathscr{Y}^{g,\xi}$-Mertens-Doob-Meyer decomposition. In Section \ref{BSDE}, we illustrate an application of this decomposition to the study of a new class of \textit{BSDEs with weak constraints at stopping times}. In particular, we show that the family of minimal time $t$-values $Y_t$, with $(Y,Z)$ a supersolution, is a strong $\mathscr{Y}^{g,\xi}$-submartingale, which can be characterized as the unique solution of a specific reflected backward stochastic differential equation. Finally, in Appendix \ref{Sec4}, we give some sufficient conditions under which the control-stopping problem \eqref{control-stopping} admits a value and a saddle point.

%Then, we give in Theorem \ref{BSDE representation} a BSDE characterization of the value function, in particular, it can be represented in terms of minimal supersolutions to a family of RBSDEs. Finally, the Appendix collects several technical results for the existence and characterization of the value function of our problem.
%We provide a rigourous analysis of the minimal supersolution of such a BSDE.
%
%%Other papers to mention: 
%%
%%\textbf{ Trevino}\\
%%
%%\textbf{Bouchard, Chassagneux, Bouveret} \cite{BBC16} - bermudean options - by duality arguments, backward recursive algorithm\\
%%
%%\textbf{Matoussi,Piozin, Possamai}\\
%%
%%\textbf{Possamai, Tan, Zhou} - Stochastic control for a class of nonlinear kernels and applications\\
%
%
%This is the first paper which addresses the question of approximative hedging of American options.\\
%
%The  problem studied in the present paper is linked to the approximative hedging of an American Option (not studied before).

%%%%%%%%%%%%%%%%%%%%
%%%%%%%%%%%%%%%%%%%%
%%%%%%%%%%%%%%%%%%%%
%\input{notations.tex}
%%%%%%%%%%%%%%%%%%%%
%%%%%%%%%%%%%%%%%%%%
%%%%%%%%%%%%%%%%%%%%

\paragraph{Notations}
\label{notation}
We first introduce a series of notations that will be used throughout the paper. Let $d\geq 1$ and $T>0$ be fixed. We denote by $W:=(W_t)_{t\in[0,T]}$ a $d$-dimentional Brownian motion defined on a probability space $(\Omega,\mathscr{F},\PB)$ with $\PB$- augmented natural filtration $\FB= (\mathscr{F}_t)_{t\in[0,T]}$. The notation $\mathbb{E}$ will stand for the expectation with respect to $\PB$.\\
{\color{black} Let $\mathcal P$ be the predictable $\sigma$-algebra on $\Omega\times [0,T]$. For each $T>0$, we introduce the following spaces:}
\begin{itemize}
\item
 $\textbf{L}_p(U,{\Gc})$  is the set of $p$-integrable $\Gc$-measurable random variables with values in $U$, $p\geq 0$, $U$ a Borel set of $\mathbf{R}^n$ for some $n\geq 1$ and $\Gc \subset \mathscr{F}$. When $U$ and $\Gc$ can be clearly identified by the context, we omit them. This will be in particular the case when $\Gc=\mathscr{F}$.
\item    $\mathbf{H}_2$ is the set of
$\mathbf{R}^d$-valued $\FB$-predictable processes $\phi=(\phi_t)_{t\in[0,T]}$ such that $$\| \phi\|^2_{\mathbf{H}_2} := \mathbb{E} \left[\int_0 ^T |\phi_t |^2 dt \right] < \infty.$$

\item ${\mathbf K}_2$ is the set of real-valued non-decreasing RCLL and $\FB$-predictable
 processes $K=(K_t)_{t\in[0,T]}$ with $K_0 = 0$ and $\mathbb{E}[K^2_T] < \infty$. 

\end {itemize}

\begin{itemize}

\item  $\T_{0}$ denotes the set of $\FB$-stopping times $\tau$ such that $\tau \in [0,T]$ a.s. The notation $\mathbb{E}_\tau[.]$ stands for the conditional expectation given $\mathscr{F}_\tau$, $\tau \in \T_{0}$.

\item For $\theta$ in $\T_{0}$,    $\T_{\theta}$  is the set of stopping times
$\tau\in \T_{0}$ such that $\theta \leq \tau \leq T$  $\PB$-a.s.

\item   ${\mathbf S}_{2}$ is the set of real-valued optional 
 processes $\phi=(\phi_t)_{t\in[0,T]}$ such that
$$\| \phi\|^2_{{\mathbf S}_{2}} := \mathbb{E}[\esssup_{\tau \in \mathcal{T}_0} |\phi_\tau |^2]<  \infty.$$

\end {itemize}

\section{$\mathscr{Y}^{g,\xi}$-Mertens-Doob-Meyer decomposition of $\mathscr{Y}^{g,\xi}$-submartingale systems}\label{decomp}

In this section, we introduce the notion of $\mathscr{Y}^{g,\xi}$-submartingale systems, and provide a $\mathscr{Y}^{g,\xi}$-Mertens-Doob-Meyer decomposition of such systems, which is a new result in the literature, even in the case $g \equiv 0$.

\subsection{Definitions and assumptions}

We give here some important definitions and assumptions. Consider a map $g$ (also called \textit{driver} in the sequel) which satisfies the following assumption:

\begin{Assumption}\label{driver}

$g$ is a measurable map from $\Omega\times [0,T] \times \mathbf{R}\times\mathbf{R}^d$ to $\mathbf{R}$ and $g(.,y,z)$ is $\FB$-predictable, for each $(y,z)\in\mathbf{R}\times\mathbf{R}^d$. There exists a constant $K_g>0$ and a random variable $\chi_g\in\mathbf{L}_2(\mathbf{R}^+)$, such that, $\forall (t,y_i,z_i)\in [0,T] \times \mathbf{R}\times\mathbf{R}^d,i=1,2$,
\begin{eqnarray*}
|g(t,0,0)|&\leq& \chi_g \quad \PB-a.s.,\\
|g(t,y_1,z_1)- g(t,y_2,z_2)| &\leq& K_g(|y_1-y_2|+|z_1-z_2|) \quad \PB-a.s.
%$$ \forall (t,y_i,z_i)\in [0,T] \times \mathbf{R}\times\mathbf{R}^d,i=1,2.
\end{eqnarray*}
\end{Assumption}
We recall first the definition of the conditional $g$-expectation.
\begin{definition}[Conditional $g$-expectation] We recall that if $g$ {\color{black}satisfies Assumption \ref{driver}} and if $\xi$ is a square-integrable $\mathscr{F}_T$-measurable random variable, then there exists a unique solution $(X,\pi) \in \mathbf{S}_2 \times \mathbf{H}_2$ to the following BSDE
\begin{align*}
X_t=\xi+\int_t^Tg(s,X_s,\pi_s)ds-\int_t^T\pi_sdW_s \text{ for all } t \in [0,T] {\rm \,\, a.s. }
\end{align*}
For $t\in [0,T]$, the nonlinear operator $\mathcal{E}^{g}_{t,T} : \mathbf{L}_2(\mathscr{F}_T) \mapsto \mathbf{L}_2(\mathscr{F}_t)$ which maps a given terminal {\color{black}condition} $\xi \in \mathbf{L}_2(\mathscr{F}_T)$ to the first component at time $t$ of the solution of the above BSDE (denoted by $X_t$) is called conditional $g$-expectation at time $t$. It is also well-{\color{black}known} that this notion can be extended to the case where the (deterministic) terminal time $T$ is replaced by a general stopping time $\tau \in \mathcal{T}_0$ and $t$ is replaced by a stopping time $S$ such that $S \leq \tau$ a.s.

\end{definition}

Let $\xi$ be a right-continuous left-limited (RCLL) process in $\textbf{S}_2$ and $g$ {\color{black}satisfying Assumption \ref{driver}}. We denote by $\mathscr{Y}^{g,\xi}$ the nonlinear operator (semigroup) associated with the reflected BSDE with driver $g$ and lower obstacle $\xi_{\cdot}$, which is the analagous of the operator $\mathscr{E}^g$, induced by the non-reflected BSDE with driver $g$.

\begin{definition}[Nonlinear operator $\mathscr{Y}^{g,\xi}$] For each $\tau \in \mathcal{T}_0$ and each $\zeta \in \textbf{L}_2(\mathscr{F}_\tau)$ such that $\zeta \geq \xi_\tau$ a.s., we define $\mathscr{Y}^{g,\xi}_{\cdot, \tau}(\zeta):=Y_\cdot$, where $Y_\cdot$ corresponds to the first component of the solution of the reflected BSDE associated with terminal time $\tau$, driver $g$ and lower obstacle $\xi_t \textbf{1}_{\tau<t}+\zeta\textbf{1}_{\tau \geq t}.$
\end{definition}
Note that, by the flow property for reflected BSDEs, for each driver $g$, the operator $\mathscr{Y}^{g,\xi}$ is consistent
(or, equivalently, satisfies a semigroup property) with respect to terminal
condition $\zeta$. By the comparison theorem for reflected BSDEs with
RCLL obstacle, we obtain that $\mathscr{Y}^{g,\xi}$ is monotonous with respect to the
terminal condition.\\

Let us recall the definition of a $\mathcal{T}_0$-admissible system.

%Let us assume that the map $\Phi$ is upper-semicontinuous in $(t,m)$ for each $\omega$.
%
%Let us define the second value function:
%
%$$\mathcal{Y}_\tau(\mu):= \essinf_{\alpha \in \mathscr{A}_{\tau, \mu}} \esssup_{\theta \in \mathcal{T}_\tau} \mathcal{E}_{\tau,T}^g[\Phi(M_\theta^{\mu,\alpha})].$$

\begin{definition}
A family  $X=\{X(\tau), \tau \in \mathcal{T}_0\}$ is a $\mathcal{T}_0$-system (or \rm{admissible}) if for all $\tau,\tau' \in \mathcal{T}_0$,

\begin{equation}
\begin{cases}

X(\tau) \in \textbf{L}_0(\mathscr{F}_\tau),\\

X(\tau)=X(\tau') \text{  a.s. on } \{\tau=\tau'\}.
\end{cases}
\end{equation}
\end{definition}

\noindent We now introduce the notion of $\mathscr{Y}^{g,\xi}$-submartingale system (resp.   $\mathscr{Y}^{g,\xi}$-martingale system).

\begin{definition}[$\mathscr{Y}^{g,\xi}$-\textit{submartingale} system]\label{Y-submart}

An admissible family $(X(\tau), \tau \in \mathcal{T}_0)$ is said to be a $\mathscr{Y}^{g,\xi}$-{\rm submartingale {\color{black}system}} (resp. a $\mathscr{Y}^{g,\xi}$-{\rm martingale {\color{black}system}}) if $X(\tau) \geq \xi_\tau$ a.s for all {\color{black}$\tau \in \mathcal{T}_0$, $\mathbb{E}[ \underset{\tau \in \mathcal{T}_0}{\esssup}X^2(\tau)]<\infty$} and for all $\tau, \sigma \in \mathcal{T}_0$ such that $\sigma\in \mathcal{T}_\tau$ a.s.,
$$X(\tau)  \leq \mathscr{Y}^{g,\xi}_{\tau,\sigma}(X(\sigma))\,\, {\rm a.s.}\,\,\, {\rm (resp. }  X(\tau) = \mathscr{Y}^{g,\xi}_{\tau,\sigma}(X(\sigma))\,\rm a.s.) {\,\,}$$

\end{definition}

We now give the definition of a $\mathscr{Y}^{g,\xi}$-submartingale process (resp.   $\mathscr{Y}^{g,\xi}$-martingale process). \begin{definition}[\textit{Strong} $\mathscr{Y}^{g,\xi}$-\textit{submartingale} process]
An optional process $(Y_t) \in \mathbf{S}_2$ satisfying $Y_\sigma \geq \xi_\sigma$ a.s. for all $\sigma \in \mathcal{T}_0$ and such that $\mathbb{E}[\underset{\tau \in \mathcal{T}_0}{\esssup} (Y_\tau)^2]<\infty$  is said to be a strong $\mathscr{Y}^{g,\xi}$-submartingale {\color{black} (resp. a  strong $\mathscr{Y}^{g,\xi}$-{\rm martingale})} if 
$Y_S \leq \mathscr{Y}^{g,\xi}_{\, S,\tau}(Y_\tau)$ {\color{black} (resp $Y_S=\mathscr{Y}^{g,\xi}_{\, S,\tau}(Y_\tau)$)} a.s. on $S \leq \tau$, for all $S,\tau \in \mathcal{T}_0$.
\end{definition}

\subsection{$\mathscr{Y}^{g,\xi}$-submartingale systems: aggregation and Mertens-Doob-Meyer decomposition}
In this section, we prove the $\mathscr{Y}^{g,\xi}$-Mertens-Doob-Meyer decomposition of $\mathscr{Y}^{g,\xi}$-submartingale systems. To this purpose, using tools from the general theory of processes, we first provide an aggregation result for $\mathscr{Y}^{g,\xi}$-submartingale systems. 
\begin{theorem}[Aggregation of a $\mathscr{Y}^{g,\xi}$-submartingale {\color{black}system} by a right-l.s.c.process]\label{agreg}
Let $(X(S), S \in \mathcal{T}_0)$ be an $\mathscr{Y}^{g,\xi}$-submartingale {\color{black}system}. Then there exists a right-lower semicontinuous optional process $(X_t)$ belonging to $\mathbf{S}_2$ which aggregates the family $(X(S),\,\, S \in \mathcal{T}_0)$, that is such that $X(S)=X_S$ a.s. for all $S \in \mathcal{T}_0$. Moreover, the process $(X_t)$ is a strong $\mathscr{Y}^{g,\xi}$-submartingale, that is, for each $S \in \mathcal{T}_0$, $X_S \in L^2$, $X_S \geq \xi_S$ a.s. and for all $S, S' \in \mathcal{T}_0$ such that $S \geq S'$ a.s., $\mathscr{Y}^{g,{\color{black}\xi}}_{S',S}(X_S) \geq X_{S'}$ a.s.
\end{theorem}
\dproof
Fix $\tau \in \mathcal{T}_0$ and let $(\tau_n)_{n \in \mathbb{N}}$ be a non-increasing sequence of stopping times in $\mathcal{T}_\tau$ such that $\tau_n \downarrow \tau$ a.s. and for all $n \in \mathbb{N}$, we have $\tau_n>\tau$ a.s. on $\{  \tau<T\}$ and such that $\underset{n \rightarrow \infty} {\lim}X(\tau_n)$ exists a.s. By {\color{black}Definition \ref{Y-submart}} of the operator $\mathscr{Y}^{g,\xi}$, we derive that
\begin{align}\label{ineq}
    X(\tau) \leq \mathscr{Y}^{g,\xi}_{\tau, \tau_n}(X(\tau_n))\,\ \text{ a.s. for all } n \in \mathbb{N}.
\end{align}
Since the sequence of stopping times $(\tau_n)_n$ is non-increasing and the operator $\mathscr{Y}^{g,\xi}$ is consistent, we derive that
\begin{align}
    \mathscr{Y}^{g,\xi}_{\tau, \tau_n}(X(\tau_n))=\mathscr{Y}^{g,\xi}_{\tau, \tau_{n+1}}(\mathscr{Y}^{g,\xi}_{\tau_{n+1}, \tau_{n}}(X(\tau_{n}))) \geq \mathscr{Y}^{g,\xi}_{\tau, \tau_{n+1}}(X(\tau_{n+1})) \text{\,\,a.s.,}
\end{align}
where the last inequality follows by \eqref{ineq} {\color{black} and the comparaison theorem for reflected BSDEs}. This implies that the sequence $\mathscr{Y}^g_{\tau, \tau_n}(X(\tau_n))_{n \in \mathbb{N}}$ is non-increasing, and thus it converges almost surely. Moreover,
\begin{align}
    X(\tau) \leq \underset{n \rightarrow \infty}{\lim} \downarrow \mathscr{Y}^{g,\xi}_{\tau, \tau_n}(X(\tau_n)) \text{\,\,a.s.}
\end{align}
Since, by the right-continuity of the obstacle, we have $\underset{n \rightarrow \infty}{\lim }X(\tau_n) \geq \xi_\tau$ a.s., we can use the continuity result of Reflected BSDEs with respect to terminal time and terminal condition (see Proposition 3.13 in \cite{DQS16}) and obtain
\begin{align}
    X(\tau) \leq \underset{ n \rightarrow \infty}{\lim} \mathscr{Y}^{g,\xi}_{\tau, \tau_n}(X(\tau_n)) \leq \mathscr{Y}^{g,\xi}_{\tau, \tau}(\underset{ n \rightarrow \infty}{\lim} X(\tau_n)) = \underset{ n \rightarrow \infty}{\lim} X(\tau_n) \,\, \text{a.s.}
\end{align}
By Lemma 5 in \cite{DL82}, we conclude that the family $(X(S), \,\, S \in \mathcal{T}_0)$ is right lower semicontinuous. It follows from Theorem 4 in \cite{DM80} that there exists a right lower-semicontinuous optional process $(X_t)$ which aggregates the family $(X(S), \,\, S \in \mathcal{T}_0)$, which is {\color{black}a strong} $\mathscr{Y}^{g,\xi}$-submartingale.
\fproof

\begin{Remark}\label{rlsc}
By the above Theorem, we get that strong $\mathscr{Y}^{g,\xi}$-submartingale processes are right-lower semicontinuous (r.l.s.c. for short).
\end{Remark}

We now show the nonlinear $\mathscr{Y}^{g,\xi}$-Mertens decomposition of $\mathscr{Y}^{\,g,\xi}$-submartingales, which represents, to the best of our knowledge, a new result in the literature. Our proof is simple, avoiding the standard penalization technique used in the setting of standard $\mathcal{E}^g$-{\color{black}submartingales}.\\

\noindent Before giving the main theorem, we recall the following definition of mutually singular measures associated with RCLL predictable processes.

\begin{definition}
Let $A=(A_t)_{0 \leq t \leq T}$ and $A'=(A'_t)_{0 \leq t \leq T}$ belonging to ${\color{black}{\mathbf K}_2}$. The measures $dA_t$ and $dA'_t$ are said to be mutually singular and we write $dA_t \perp dA'_t$ if there exists $D \in \mathcal{P}$ such that 
$$\mathbb{E}\left[\int_0^T 1_{D^c}dA_t\right]=\mathbb{E}\left[\int_0^T 1_{D}dA'_t\right]=0.$$
\end{definition}

We now prove the following decomposition \textit{\`a la Mertens.}

\begin{theorem}[$\mathscr{Y}^{g,\xi}$-Mertens decomposition of $\mathscr{Y}^{g,\xi}$-submartingales]\label{eee1}

Let $(Y_t)$ be an optional process such that $\mathbb{E}[\underset{\tau \in \mathcal{T}_0}{\esssup} (Y_\tau)^2]<\infty$  and $(\xi_t)$ be a right-continuous left-limited strong semimartingale  such that $\mathbb{E}[\underset{\tau \in \mathcal{T}_0}{\esssup} (\xi_\tau)^2]<\infty$. The process $(Y_t)$ is a strong $\mathscr{Y}^{\,g,\xi}$-submartingale process if and only if there exist two non-decreasing right-continuous predictable processes $A,K \in \mathbf{K}_2$ such that 
$A_0=0$ and $K_0=0$, a non-decreasing right-continuous adapted purely discontinuous process $C'$ in $\mathbf{S}_2$ with $C'_{0^-}=0$ and a process $Z \in \mathbf{H}_2$ such that a.s. for all $t \in [0,T]$,
\begin{align}
\begin{cases}
&Y_t=Y_T+\displaystyle\int_t^Tg(s,Y_s,Z_s)ds-\int_t^T Z_s dW_s+A_T-A_t-K_T+K_t-C'_{T^-}+C'_{t^-},\,\, \label{i30}  \\
&{\color{black}Y_\tau \geq \xi_\tau }; {\rm\,\, a.s.\,\, for \,\, all\,\, } \tau \in \mathcal{T}_0;\\
& \displaystyle\int_0^T(Y_{s^-}-\xi_{s^-})dA_s=0 {\rm \,\, a.s.} ; \\
&dA_t \perp dK_t.
\end{cases}
\end{align}
Moreover, this decomposition is unique.
\end{theorem}

\begin{proof}
Fix $S \in \mathcal{T}_0$. Since $(Y_t)$ is a strong $\mathscr{Y}^{\,g,\xi}$-submartingale, we derive that for each $\tau \in \mathcal{T}_S$, we have $Y_S \leq \mathscr{Y}^{\,g,\xi}_{\, S,\tau}(Y_\tau)$ a.s. By the characterization of the solution of a reflected BSDE in terms of an optimal stopping problem with $g$-expectations,  we have $Y_S \leq \underset{S' \in \mathcal{T}_S}{\esssup}\, \mathcal{E}^{g}_{S, S' \wedge \tau}(Y_\tau \textbf{1}_{S' \geq \tau}+ \xi_{S'}\textbf{1}_{S' < \tau}).$ By arbitrariness of $\tau \in \mathcal{T}_S$, hence we get
\begin{align}\label{i20}
Y_S \leq \essinf_{\tau \in \mathcal{T}_S} \esssup_{S' \in \mathcal{T}_S} \mathcal{E}^{g}_{S, S' \wedge \tau}(Y_\tau \textbf{1}_{S' \geq \tau}+ \xi_{S'}\textbf{1}_{S' < \tau})\,\,\, {\rm a.s. }
\end{align}
Now, one can remark that we have
\begin{align*}
Y_S=\esssup_{S' \in \mathcal{T}_S} \mathcal{E}^{g}_{S,S \wedge S'}\left(Y_S \textbf{1}_{S' \geq S}+\xi_{S'} \textbf{1}_{S>S'}\right) {\rm\,\, a.s.}
\end{align*}

As $S \in \mathcal{T}_S$, we deduce:
\begin{align}\label{i21}
Y_S \geq \essinf_{\tau \in \mathcal{T}_S} \esssup_{S' \in \mathcal{T}_S} \mathcal{E}^{g}_{S,\tau \wedge S'}\left(Y_\tau \textbf{1}_{S' \geq \tau}+\xi_{S'} \textbf{1}_{\tau>S'}\right) {\rm\,\, a.s.}
\end{align}

The  inequalities \eqref{i20} and \eqref{i21} allow to conclude that
\begin{eqnarray}\label{DRBSDE_Dynkin} Y_S =  \essinf_{\tau \in \mathcal{T}_S} \esssup_{S' \in \mathcal{T}_S} \mathcal{E}^{g}_{S,S \wedge S'}\left(Y_\tau \textbf{1}_{S' \geq \tau}+\xi_{S'} \textbf{1}_{\tau>S'}\right) {\rm\,\, a.s.}\end{eqnarray}

We now use the characterization of the value function of  generalized Dynkin game in terms of solution to doubly reflected BSDE, first introduced by Cvitanic et al. \cite{CK96}. Namely, Theorem 4.5 in \cite{GIOOQ17} ensures that, given two obstacles $(\xi_t)$ and $(\zeta_t)$ supposed to be resp. r.u.s.c. and r.l.s.c., and satisfying the Mokobodzki's condition, the value function of a Generalized Dynkin game given by $$\bar{Y}_S:=\underset{\tau \in \mathcal{T}_S} {\essinf}\underset{\sigma \in \mathcal{T}_S}{\esssup}\,\mathcal{E}^{g}_{S, \tau \wedge \sigma}[\xi_{\tau} \textbf{1}_{\tau <\sigma}+\zeta_{\sigma} \textbf{1}_{\sigma\leq \tau}]=\underset{\sigma \in \mathcal{T}_S}{\esssup}\underset{\tau \in \mathcal{T}_S} {\essinf}\,\mathcal{E}^{g}_{S, \tau \wedge \sigma}[\xi_{\tau} \textbf{1}_{\tau <\sigma}+\zeta_{\sigma} \textbf{1}_{\sigma\leq \tau}]\,,$$ is such that $\bar{Y}$ is the first component of the solution of the doubly reflected BSDE with driver $g$ and obstacles $(\xi_t)$ and $(\zeta_t)$. %see Theorem 4.5 in \cite{GIOOQ17}), 
Hence, this result can be applied in our setting, since $(\xi_t)$ is RCLL and $(Y_t)$ is r.l.s.c. (see Remark \ref{rlsc}) and the Mokobodzki's condition is satisfied. Thus,  \reff{DRBSDE_Dynkin} implies that the process $(Y_t)$ coincides with the first component of the solution of the doubly reflected BSDE associated with obstacles $(Y_t)$ and  $(\xi_t)$, i.e. there exist two non-decreasing right-continuous predictable processes $A,K \in \mathbf{K}_2$ such that 
$A_0=0$ and $K_0=0$, two non-decreasing right-continuous adapted purely discontinuous processes $C,C'$ in $\mathbf{S}_2$ with $C_{0^-}=0$ and $C'_{0^-}=0$ and a process $Z \in \mathbf{H}_2$ such that a.s. for all $t \in [0,T]$,
\begin{align}
\begin{cases}
&Y_t=Y_T+\displaystyle\int_t^Tg(s,Y_s,Z_s)ds-\int_t^T Z_s dW_s+A_T-A_t+C_{T^-}-C_{t^-}-K_T+K_t-C'_{T^-}+C'_{t^-},\,\, \label{i31}  \\
&Y_t \geq \xi_t\,{\rm \,\, a.s.\,\,}; \\
&\displaystyle \int_0^T(Y_{s^-}-\xi_{s^-})dA_s=0 {\rm \,\, a.s.} ; \,\, (Y_\tau-\xi_\tau)(C_\tau-C_{\tau^-})=0\, {\rm\,\, a.s.\,\, for \,\, all\,\, } \tau \in \mathcal{T}_0; \\
&dA_t \perp dK_t\,; \,\, \, dC_t \perp dC'_t.
\end{cases}
\end{align}
Using the above equation, we can observe that $\Delta C_t=(Y_{t^+}-Y_t)^-$ and $\Delta C'_t=(Y_{t^+}-Y_t)^+$ for all $t$ a.s., where $Y_{t^+}$ stands for the process of right-limits, which exist by the above decomposition. Since the process $(Y_t)$ is r.l.s.c. by Remark \ref{rlsc}, we get that $\Delta C_t=0$ for all $0 \leq t \leq T$ a.s. {\color{black}Furthermore, the condition $Y_\tau \geq \xi_\tau$ {\rm\, a.s.\,for \, all\, } $\tau \in \mathcal{T}_0$ is satisfied due to the definition of a strong $\mathscr{Y}^{\,g,\xi}$-submartingale process.} The result follows.

Let us now show the converse implication. The reflected BSDE $\eqref{i30}$ can be seen as  a reflected BSDE associated to the \textit{generalized driver} $g(t,\omega,y,z)dt-dK_t-dC'_{t^-}$.

Fix $\tau \in \mathcal{T}_S$. Using the flow property for reflected BSDEs and their representation as the value function of an optimal stopping problem, we get
\begin{align}
Y_S=\esssup_{S' \in \mathcal{T}_S} \mathcal{E}^{g-dK-dC'}_{S,S' \wedge \tau} \left[ Y_\tau \textbf{1}_{\tau \leq S'}+\xi_{S'} \textbf{1}_{S'<\tau}\right] {\rm \,\, a.s.}
\end{align}

Using the comparison theorem for BSDEs with \textit{generalized driver}, we deduce that
\begin{align}
Y_S \leq \esssup_{S' \in \mathcal{T}_S} \mathcal{E}^{g}_{S,S' \wedge \tau} \left[ Y_\tau \textbf{1}_{\tau \leq S'}+\xi_{S'} \textbf{1}_{S'<\tau}\right] {\rm \,\, a.s.},
\end{align}
which implies that
$$Y_S \leq \mathscr{Y}^{\,g,\xi}_{S,\tau}[Y_\tau] {\rm \,\, a.s., \,\, for \,\, all\,\,} \tau \in \mathcal{T}_S.$$
The uniqueness of the decomposition follows by the uniqueness of the decomposition of a semimartingale and the fact that the measures $dA$ and $dK$ (resp. $dC$ and $dC'$) are mutually singular.
\fproof
\begin{Remark}
From the previous Theorem, we deduce that strong $\mathscr{Y}^{g,\xi}$-submartingales have left and right limits.
\end{Remark}
\end{proof}

In the case when the process $\mathscr{Y}^{g,\xi}$ is right-continuous left-limited (RCLL), we obtain the following $\mathscr{Y}^{g,\xi}$-\textit{Doob-Meyer decomposition}, which also seems a new result in the literature.

\begin{theorem}[$\mathscr{Y}^{g,\xi}$-Doob-Meyer decomposition of RCLL $\mathscr{Y}^{g,\xi}$-submartingales]\label{DMRCLL}
Let $(Y_t)$ be an optional process such that $\mathbb{E}[\underset{\tau \in \mathcal{T}_0}{\esssup} (Y_\tau)^2]<\infty$  and $(\xi_t)$ be a right-continuous left-limited strong semimartingale such that $\mathbb{E}[\underset{\tau \in \mathcal{T}_0}{\esssup} (\xi_\tau)^2]<\infty$. The process $(Y_t)$ is a \textit{RCLL strong $\mathscr{Y}^{\,g,\xi}$-submartingale process} if and only if there exist two non-decreasing right-continuous predictable processes $A,K \in \mathbf{K}_2$ such that 
$A_0=0$ and $K_0=0$ and a process $Z \in \mathbf{H}_2$ such that a.s. for all $t \in [0,T]$,
\begin{align}\label{i30}
\begin{cases}
&Y_t=Y_T+\int_t^Tg(s,Y_s,Z_s)ds-\int_t^T Z_s dW_s+A_T-A_t-K_T+K_t;\,\,  \\
&Y_t \geq \xi_t \,{\rm \,\, a.s.\,\,}; \nonumber \\
& \int_0^T(Y_{s^-}-\xi_{s^-})dA_s=0 {\rm \,\, a.s.}; \nonumber \\
&dA_t \perp dK_t. \nonumber
\end{cases}
\end{align}
Moreover, this decomposition is unique.
\end{theorem}

\begin{proof}
%\dproof
Let $(Y_t)$ be a RCLL $\mathscr{Y}^{g,\xi}$-submartingale. We can thus apply Theorem \ref{eee1} and obtain the existence of the processes $(Z, A, K, C')$ $\in \mathbf{H}_2 \times (\mathbf{K}_2)^3$ such that \eqref{i30} holds. Due to this equation, we have $\Delta C^{'}_t=(Y_{t^+}-Y_t)$. Since the process $(Y_t)$ is right-continuous, then the process $C'=0$. %$C=0$. 
The result follows.
\fproof
\end{proof}

\section{Application to BSDEs with \textit{weak constraints} at stopping times}\label{BSDE}
In this section, we introduce a new class of BSDEs with \textit{weak constraints} at stopping times, which are related to the partial hedging of American options. Using the results from the first part, we will provide a representation of the family of $\tau$-minimal values in terms of a \textit{reflected BSDE}.
\label{Definition}

%For $S$ in $\T_{0}$,    $\T_{S}$ is  the set of  stopping times  $\tau$ such that $S \leq \tau \leq T$ a.s.\\

%For each RCLL adapted process  $\phi = (\phi_t)_{0 \leq t \leq T}$ with $\phi^{-} \in \mathcal{S}^2$, we denote by $\mathcal{R}(\phi)$ the Snell envelope of $\phi$, defined as the minimal RCLL supermartingale greater or equal to $\phi$ a.s.

%\begin{definition}[Driver, Lipschitz driver]\label{defd}
%A function $g$ is said to be a {\em driver} if
%\begin{itemize}
%\item
%$g: [0,T]  \times \Omega {\color{blue}{\times \mathbf{R} \times \mathbf{R}^d}} \rightarrow \mathbf{R} $\\
%$(\omega, t,y, z) \mapsto  g(\omega, t,y,z) $
%  is $ {\cal P} \otimes {\cal B}(\R^2)  \otimes {\cal B}(L^2_\nu)
%- $ measurable,{\color{blue}{(On n'a pas besoin de $L^2_\nu$)}}
%\item $g(.,0,0,0) \in \mathbf{H}_2$.
%\end{itemize}
%A driver $g$ is called a {\em Lipschitz driver} if moreover there exists a constant $ C \geq 0$ such that $d\PB \otimes dt$-a.s.\,,
%for each $(y_1, z_1)$, $(y_2, z_2)$,
%$$|g(\omega, t, y_1, z_1) - g(\omega, t, y_2, z_2)| \leq
%C (|y_1 - y_2| + |z_1 - z_2|).$$
%\end{definition}
\subsection{Definition and Assumptions}

We introduce here the new mathematical object.

\begin{definition}(BSDEs with weak constraints at stopping times)
Given a measurable map $\Psi: [0,T] \times  {\color{black}\Omega\times\mathbf{R}} \rightarrow [0,1]$, with , $\tau \in \mathcal{T}_0$ and $\mu \in \mathbf{L}_0([0,1], \mathscr{F}_\tau)$, we say that $(Y,Z) \in \mathbf{S}_2 \times \mathbf{H}_2$ is a supersolution of the BSDE with generator $g:\Omega \times [0,T] \times \mathbf{R} \times \mathbf{R} ^d \rightarrow \mathbf{R}$ and  weak constraints  $({\color{black}\Psi},\mu,\tau)$   if, for any $0\leq t\leq s\leq T$,

\begin{eqnarray}
Y_t \geq Y_s&+\displaystyle\int_t^sg(u,Y_u,Z_u)du-\displaystyle\int_t^sZ_udW_u;\label{Dynamics} \\
& \mathbb{E}_\tau[\Psi(\theta, Y_\theta)] \geq \mu \,\,\, \text{for all  } \theta \in \mathcal{T}_\tau. \label{weak_cond}
\end{eqnarray}

\end{definition}

\noindent The terminology \textit{BSDEs with weak constraints at stopping times} is due to the fact that, given a stopping time $\tau \in \mathcal{T}_0$ and a threshold $\mu \in \textbf{L}_0$, the first component of the solution of the above BSDE, here denoted by $(Y_t)$, satisfies the condition $\mathbb{E}_\tau[\Psi(\theta, Y_\theta)] \geq \mu$  for all  $\theta \in \mathcal{T}_\tau$. The wellposedness of this class of BSDEs, under appropriate assumptions, is discussed below.\\
{\color{black}\begin{Remark}
The set $[0, 1]$ in the above definition is chosen for ease of notation and can be replaced by any closed interval $ U \subset \mathbf{R} \cup \{-\infty\}$.
\end{Remark}}

{\color{black}Throughout the paper, }the driver $g$ is supposed to satisfy Assumption \ref{driver} and the map $\Psi$ the following assumption.

\begin{Assumption}\label{AssPsi}
For $dt \times d\mathbb{P}$-a.e. $(t, \omega) \in [0,T] \times \Omega$, the map $y \in \mathbf{R} \mapsto \Psi(t,\omega,y)$ is non-decreasing, right-continuous, valued in $[0,1] \cup \{-\infty\}$ and its left-continuous inverse $\Phi(t,\omega,\cdot)$ satisfies $\Phi:[0,T] \times \Omega \times [0,1] \mapsto [0,1]{\color{black}\cup \{+\infty\}}$ is measurable.
\end{Assumption}
By left-continuous inverse, we mean the left-continuous map defined, for any fixed $(t,\omega,x)$, by
$$\Phi(t,\omega,x):= \inf\{y \in \mathbf{R}, \,\, \Psi(t,\omega,y)\geq x\}\;,$$
and hereby satisfies 
$$\Phi (t,\omega, \Psi(t,\omega,x)) \leq x \leq \Psi (t,\omega, \Phi(t,\omega,x)).$$\\

\noindent \textit{Well-posedness of BSDEs with weak constraints at stopping times.}\\
We address here the question of the well-posedness of the BSDE with weak constraints. Let $\xi$ be a square integrable $\mathscr{F}_T$-measurable random variable {\color{black}valued on $[0,1]$} such that $\mathbb{E}_\tau[\xi]=\mu$ a.s. Due to the martingale representation theorem, there exists $\alpha \in \mathbf{H}_2$ such that $$M_T^\alpha=\xi \quad\text{ a.s., where\,}\quad M_T^\alpha=\mu+\int_\tau^T \alpha_s dW_s .$$
The solution $(Y^\alpha_t,Z^\alpha_t)$ of the reflected BSDE associated with the driver $g$ and obstacle $(\Phi(t,M_t^\alpha))$ (which exists under the above assumptions) is a supersolution of the BSDE with weak constraints. {\color{black} Indeed, the inequality \eqref{Dynamics} is satisfied due to the increasing property of the finite-variation process appearing in the decomposition of the RBSDE $(Y^\alpha_t,Z^\alpha_t)$. Furthermore, we obtain the inequality \eqref{weak_cond} by using the martingale property of $M^\alpha$ and applying the map $\Phi$ on both sides of the constraint \eqref{weak_cond}.}

Due to the weak constraints, we do not expect uniqueness of the solution.\\

\noindent We now introduce the set $\Theta(\tau,\mu)$ of $(\tau,\mu)$-initial supersolutions, which is defined as follows:
$$\Theta(\tau,\mu):= \{Y_\tau:\,\, (Y,Z) {\rm  \,\, is\,\, a\,\, supersolution\,\, of\, }  \eqref{Dynamics} {\rm \,\, and \,\,} \eqref{weak_cond}\}.$$

\subsection{Characterization of the minimal $\tau$-values family as a control-stopping game problem}

\noindent The aim of this subsection is to study the lower bound of the set $\Theta(\tau,\mu)$, that is $\essinf \Theta(\tau,\mu)$, which corresponds to the efficient price of an American option under the risk constraint $\mathbb{E}_\tau[\Psi(\theta, Y_\theta)] \geq \mu$ a.s., for all $\theta \in \mathcal{T}_\tau$. In particular, we will provide a characterization in terms of a control-stopping game problem.

\noindent The first step is to reformulate the problem of interest into an equivalent one involving "strong" constraints, in a similar manner as for the efficient hedging of European options. We refer to Bouchard, Elie, Touzi \cite{BET10} in the Markovian framework or Bouchard, Elie, Reveillac \cite{BER15} in the non-Markovian setting. \\
For this purpose, let $\mathscr{A}_{\tau, \mu}$ denote the set of elements $\alpha \in \mathbf{H}_2$ such that 
\begin{equation}
M^{^{\tau,\mu,\alpha}}_{\cdot}:= \mu+\int_\tau^{\tau\vee.}\alpha_s dW_s\quad \text{takes values in} \,[0, 1].
\end{equation}
The main difficulty in our setting case is due to the fact that we can {\it a priori} only obtain an equivalent formulation in which the controlled martingale depends on the chosen stopping time $\theta$, i.e. for each $\theta \in \mathcal{T}_\tau$ there exists $\alpha^\theta \in \mathscr{A}_{\mu,\tau}$ such that $\mathbb{E}_\tau[\Psi(\theta,Y_\theta)] \geq \mu$ is equivalent to $Y_\theta \geq \Phi(\theta,M_\theta^{\tau,\mu,\alpha^{\theta}})$ a.s.\\

We show in the next Lemma that we can overcome this issue and obtain the existence of a control process independent of the stopping time.
\begin{lemma}[\textit{Equivalent reformulation with strong constraints}]\label{keyres}
Fix $\tau \in \mathcal{T}_0$ and $\mu \in \mathbf{L}_0([0,1], \mathscr{F}_\tau)$.  Let $(Y_t, Z_t) \in \mathbf{S}_2 \times \mathbf{H}_2$ be a supersolution of the BSDE \eqref{Dynamics}-\eqref{weak_cond}. Then the \textit{weak constraint} $\mathbb{E}\left[\Psi(\theta, Y_\theta)| \mathscr{F}_\tau  \right] \geq \mu$ for all $\theta \in \mathcal{T}_{\tau}$ is equivalent to the \textit{strong constraint} 
$Y_\theta \geq \Phi(\theta,M_\theta^{^{\tau,\mu,\alpha}})$ a.s., for a given $\alpha \in \mathscr{A}_{\tau, \mu}$ and for all $\theta \in \mathcal{T}_{\tau}$.
\end{lemma}

\begin{proof}
We first show that \eqref{weak_cond} implies that there exists $\alpha \in \mathscr{A}_{\tau, \mu}$ such that $Y_\theta \geq \Phi(\theta,M_\theta^{^{\tau,\mu,\alpha}})$ a.s. for all $\theta \in \mathcal{T}_{\tau}$. To this purpose, we define, for each $\sigma \in \mathcal{T}_0,$ the $\mathscr{F}_\sigma$-measurable random variable
\begin{align}\label{value}
V(\sigma):=  \essinf_{\tau \in \mathcal{T}_\sigma} \mathbb{E}\left[\Psi(\tau, Y_\tau)| \mathscr{F}_\sigma\right].
\end{align}
By classical results of the general theory of processes (see \cite{DM80}) , the family $(V(\sigma),\,\,\sigma \in \mathcal{T}_0)$ is a submartingale {\color{black}system}, which can be aggregated by an optional process $(V_t)$ admitting the Mertens decomposition:
\begin{align*}
V_t:=N_t+A_t+C_{t^-},
\end{align*}
where $N$ is a square integrable martingale, $A$ is a non-decreasing RCLL predictable process such that $A_0=0$ and $C$ is a non-decreasing right-continuous adapted process, purely discontinuous satisfying $C_{0^-}=0$. By the martingale representation theorem, there exists $\alpha \in \mathbf{H}_2$ such that $N_\theta=N_\tau+\displaystyle\int_\tau^\theta \alpha_sds$, for all $\theta \in \mathcal{T}_\tau$.\\

\noindent Since for all $\theta \in \mathcal{T}_\tau$ we have $\mathbb{E}\left[\Psi(\theta,Y_\theta)| \mathscr{F}_\tau\right] \geq \mu $ a.s., we get that $\underset{\theta \in \mathcal{T}_\tau}{\essinf}\,\mathbb{E}\left[\Psi(\theta,Y_\theta)| \mathscr{F}_\tau\right] \geq \mu$ a.s. Hence, by using the definition of $V$ (see \eqref{value}), we obtain 
\begin{align}\label{tau}
V_\tau=N_\tau+A_\tau+C_{\tau^-} \geq \mu {\rm\,\, a.s.}
\end{align}
We now fix $\theta \geq \tau$. We have $\Psi(\theta, Y_\theta)=\mathbb{E}\left[\Psi(\theta,Y_\theta)| \mathscr{F}_\theta\right] \geq\underset{\sigma  \in \mathcal{T}_\theta}{\essinf}\,\mathbb{E}\left[\Psi(\sigma,Y_\sigma)| \mathscr{F}_\theta \right]=V_\theta {\rm \,\, a.s.}$
This observation, together with \eqref{tau}, implies
$$\Psi(\theta,Y_\theta) \geq N_\theta+A_\theta+C_{\theta^-}=N_\tau+A_\tau+C_{\tau^-}+\int_\tau^\theta \alpha_sdW_s+A_\theta-A_\tau+C_{\theta^-}-C_{\tau^-} {\color{black}\geq \mu+\int_\tau^\theta \alpha_sdW_s},$$
{\color{black}where the last inequality follows from inequality \eqref{tau} and the fact the processes $A$ and $C$ are non-decreasing.

Now, define the stopping time $\theta^{\alpha}:=\inf \{ s \geq \tau, \,\, M_s^{\tau,\mu, \alpha}=0\}$ and the control process $\bar{\alpha}:=\alpha\textbf{1}_{[0,\theta^{\alpha}]},$ which clearly belongs to $\mathscr{A}_{\tau,\mu}$.
Using the above inequality and the fact that for all $\theta \in \mathcal{T}_\tau$, $\Psi(\theta,Y_\theta) \geq 0$ a.s. (by Assumption \ref{AssPsi}), we finally derive 
$$ Y_\theta \geq \Phi(\theta, M_\theta^{^{\tau,\mu,\bar{\alpha}}})  {\rm \,\, a.s.}$$}
The second implication is trivial.
%By definition, $X$ is a strong submartingale and hence by applying the Doob-Mayer decomposition, we get the existence of a control process $\alpha \in \mathbf{H}_2$ and of a predictable increasing process $A$ such that $\bar{X}_t=M_t+A_t.$
%We obtain that for all $\theta \geq \tau,$  $\Psi(Y_\tau)\geq M_\tau^{\mu,\alpha}$ is equivalent to
%$\mathbb{E}[\Psi(Y_\tau)]\geq \mu.$\\

\fproof\\
\end{proof}
From the previous Lemma, we easily deduce the following result.
%It follows from Lemma \ref{keyres} that a supersolution of $BSDE\, (g,\Psi,\mu,\tau)$ can be represented by means of $g$-expectation introduced by Peng \cite{P97}. We recall this definition below.
%\begin{definition}
%Given $(\tau,\theta)\in\mathcal{T}_0\times\mathcal{T}_\tau$ and $\xi\in\textbf{L}_2(\mathbf{R},{\mathscr{F}}_\theta)$, we set $\mathcal{E}^{g}_{\tau,\theta}[\xi]:=Y_\tau $ in which $(Y,Z)$ is the unique solution of 
%$$ Y_t=\xi+\Int_{t\wedge\theta}^\theta g(s,Y_s,Z_s)ds-\int_{t\wedge\theta}^\theta Z_sdW_s $$
%such that $(Y,Z)\in \mathbf{S}_2 \times \mathbf{H}_2$.
%\end{definition}

\begin{proposition}\label{res}
Fix $\tau \in \mathcal{T}_0, \,\, \mu \in \mathbf{L}_{0}([0,1], \mathscr{F}_\tau)$. Then $(Y,Z) \in \mathbf{S}_2 \times \mathbf{H}_2$ is a supersolution of the BSDE \eqref{Dynamics}-\eqref{weak_cond} if and only if $(Y,Z)$ satisfies \eqref{Dynamics} and there exists $\alpha \in \mathscr{A}_{\tau, \mu}$ such that $Y_\nu \geq \underset{\theta \in \mathcal{T}_{\nu}}{\esssup}\,\mathcal{E}^{g}_{\nu,\theta}[\Phi(\theta,M_\theta^{\tau,\mu,\alpha})]$ a.s. for all $\nu \in \mathcal{T}_\tau.$
\end{proposition}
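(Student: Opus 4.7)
The plan is to derive the equivalence by combining Lemma \ref{keyres} with a $g$-supermartingale/comparison argument for the $g$-expectation $\mathcal{E}^g$.

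For the direct implication, assume $(Y,Z)\in \textbf{S}_2\times\textbf{H}_2$ solves \eqref{Dynamic}--\eqref{weak_cond}. By Lemma \ref{keyres}, there exists $\alpha\in \mathbf{V}_{\tau,\mu}$ such that
$$Y_\theta \;\geq\; \Phi\bigl(\theta,M_\theta^{\tau,\mu,\alpha}\bigr)\quad\text{a.s.\ for all } \theta\in\mathcal{T}_\tau.$$
Fix $\nu\in\mathcal{T}_\tau$ and $\theta\in\mathcal{T}_\nu$. First I would show that \eqref{Dynamic} can be rewritten as a BSDE on $[\nu,\theta]$ with an additional non-decreasing adapted process $K$: setting $K_t:=Y_\nu-Y_t-\int_\nu^tg(u,Y_u,Z_u)du+\int_\nu^tZ_udW_u$ for $t\in[\nu,\theta]$, inequality \eqref{Dynamic} applied to consecutive pairs of times yields $K_{t_1}\le K_{t_2}$ for $t_1\le t_2$, so that $Y_\nu=Y_\theta+\int_\nu^\theta g(u,Y_u,Z_u)du-\int_\nu^\theta Z_u dW_u+K_\theta$. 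The comparison theorem for Lipschitz BSDEs applied to the non-negative driving term $dK$ gives $Y_\nu\geq \mathcal{E}^g_{\nu,\theta}[Y_\theta]$. Combining this with the monotonicity of the conditional $g$-expectation in its terminal datum yields
$$Y_\nu\;\geq\;\mathcal{E}^g_{\nu,\theta}\bigl[Y_\theta\bigr]\;\geq\;\mathcal{E}^g_{\nu,\theta}\bigl[\Phi(\theta,M_\theta^{\tau,\mu,\alpha})\bigr].$$
Taking the essential supremum over $\theta\in\mathcal{T}_\nu$ concludes this direction.

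For the converse, suppose that \eqref{Dynamic} holds and that there exists $\alpha\in \mathbf{V}_{\tau,\mu}$ such that $Y_\nu\geq \esssup_{\theta\in\mathcal{T}_\nu}\mathcal{E}^g_{\nu,\theta}[\Phi(\theta,M_\theta^{\tau,\mu,\alpha})]$ a.s.\ for every $\nu\in\mathcal{T}_\tau$. Specializing the supremum to the choice $\theta=\nu$ and using that $\mathcal{E}^g_{\nu,\nu}$ is the identity, I get $Y_\nu\geq \Phi(\nu,M_\nu^{\tau,\mu,\alpha})$ a.s.\ for every $\nu\in\mathcal{T}_\tau$. The reverse (easy) implication of Lemma \ref{keyres} then produces the weak reflection constraint $\mathbf{E}_\tau[\Psi(\theta,Y_\theta)]\geq\mu$ for all $\theta\in\mathcal{T}_\tau$, so $(Y,Z)$ is a supersolution of \eqref{Dynamic}--\eqref{weak_cond}.

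The main technical obstacle is the first step of the direct implication: turning the mere domination inequality \eqref{Dynamic} into the $g$-supermartingale inequality $Y_\nu\geq\mathcal{E}^g_{\nu,\theta}[Y_\theta]$. One must check that the process $K$ constructed from $(Y,Z)$ is indeed non-decreasing (this follows from applying \eqref{Dynamic} between all pairs of deterministic times and using the optional sampling of the resulting non-negative $\mathcal{F}$-adapted increment, together with RCLL regularity of $Y\in\textbf{S}_2$), and that the resulting added $dK$-term allows the use of the BSDE comparison theorem; the rest of the argument is then a standard application of the monotonicity of $\mathcal{E}^g$ combined with Lemma \ref{keyres}.
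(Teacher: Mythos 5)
Your proposal is correct and follows essentially the same route as the paper: the forward direction is Lemma \ref{keyres} followed by the comparison theorem and monotonicity of $\mathcal{E}^g$ (you merely make explicit, via the nondecreasing process $K$, the $g$-supermartingale inequality $Y_\nu\geq\mathcal{E}^g_{\nu,\theta}[Y_\theta]$ that the paper invokes tersely as ``comparison''), and the converse is obtained by specializing $\theta=\nu$ and applying the easy implication of Lemma \ref{keyres}. The only cosmetic difference is that the paper replaces the control from Lemma \ref{keyres} by its version stopped at the hitting time of zero of the martingale, an inessential modification.
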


\begin{proof}
Let $(Y,Z)$ be a supersolution of BSDE \eqref{Dynamics}-\eqref{weak_cond}. By Lemma \ref{keyres}, there exists $\bar{\alpha} \in \mathscr{A}_{\tau,\mu}$ such that for all $\theta \in \mathcal{T}_\tau$ we have $\Psi(\theta, Y_\theta)\geq M_\theta^{\tau,\mu, \bar{\alpha}}.$ a.s. The monotonocity of the map $\Phi$ and the above inequality imply that:
\begin{align*}
Y_{\theta} \geq \Phi(\theta, M_\theta^{\tau,\mu, \bar{\alpha}}) {\rm \,\, a.s.}
\end{align*}
By the comparison theorem for BSDEs, we get that for all $\theta \in \mathcal{T}_\nu$, we have $Y_\nu \geq \mathcal{E}^{g}_{\nu, \theta}[\Phi(\theta, M_\theta^{\tau,\mu, \bar{\alpha}})]$ a.s. Now, by arbitrariness of $\theta \in \mathcal{T}_\nu$ we finally obtain:
\begin{align}
Y_\nu \geq \esssup_{\theta \in \mathcal{T}_\nu}\mathcal{E}^{g}_{\nu,\theta}[\Phi(\theta,M_\theta^{\tau,\mu, \bar{\alpha}})] {\rm \,\, a.s.}
\end{align}
Let us show the converse implication. Let $\alpha \in \mathscr{A}_{\tau, \mu}$ such that for all $\nu \in \mathcal{T}_\tau$, we have $Y_\nu \geq \Phi(\nu, M_\nu^{\tau, \mu, {\alpha}})$ a.s. Hence we get $\Psi(\nu, Y_\nu)\geq M_\nu^{\tau,\mu, {\alpha}}$ a.s. This implies that $(Y,Z)$ satisfies $\eqref{Dynamics}$ and $\eqref{weak_cond}$.
\fproof\\
\end{proof}
Using the above results, we show in the following proposition how to relate the lower bound of the family $\Theta(\tau,\mu)$ to the value of a stochastic control/optimal stopping game. To this aim, we define the value function
\begin{equation}\label{value_func}
\mathcal{Y}(\tau, \mu):=\underset{\alpha \in \mathscr{A}_{\tau, \mu}} {\essinf}\,\underset{\theta \in \mathcal{T}_{\tau}}{\esssup}\,\mathcal{E}^{g}_{\tau,\theta}[\Phi(\theta,M_\theta^{\tau,\mu,\alpha})].
\end{equation}
\begin{proposition}
We have $\essinf \Theta(\tau,\mu)=\mathcal{Y}(\tau,\mu)$ a.s.
\end{proposition}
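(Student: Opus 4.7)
The plan is to prove the two inequalities separately, both relying on Proposition~\ref{res} and the Snell envelope representation of reflected BSDEs.

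For the inequality $\essinf \Theta(\tau,\mu) \geq \mathcal{Y}(\tau,\mu)$, I would start with an arbitrary supersolution $(Y,Z) \in \Theta(\tau,\mu)$. By Proposition~\ref{res}, there exists a control $\alpha \in \mathbf{V}_{\tau,\mu}$ such that for every $\nu \in \mathcal{T}_\tau$,
$$
Y_\nu \;\geq\; \esssup_{\theta \in \mathcal{T}_\nu} \mathcal{E}^g_{\nu,\theta}\bigl[\Phi(\theta, M_\theta^{\tau,\mu,\alpha})\bigr] \quad \text{a.s.}
$$
Specializing at $\nu = \tau$ and then taking the essential infimum over $\alpha \in \mathbf{V}_{\tau,\mu}$ on the right-hand side, we obtain $Y_\tau \geq \mathcal{Y}(\tau,\mu)$ a.s. Since $Y_\tau$ was an arbitrary element of $\Theta(\tau,\mu)$, this yields the desired inequality.

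For the reverse inequality $\essinf \Theta(\tau,\mu) \leq \mathcal{Y}(\tau,\mu)$, I would fix an arbitrary $\alpha \in \mathbf{V}_{\tau,\mu}$ and consider the reflected BSDE on $[\tau,T]$ with driver $g$, obstacle $L^\alpha_t := \Phi(t, M_t^{\tau,\mu,\alpha})$ and terminal value $L^\alpha_T$. Standard theory gives a unique solution $(\bar Y^\alpha, \bar Z^\alpha, \bar K^\alpha) \in \textbf{S}_2 \times \textbf{H}_2 \times \textbf{K}_2$, and the nonlinear Snell envelope representation yields
$$
\bar Y_\tau^\alpha \;=\; \esssup_{\theta \in \mathcal{T}_\tau} \mathcal{E}^g_{\tau,\theta}\bigl[\Phi(\theta, M_\theta^{\tau,\mu,\alpha})\bigr].
$$
By construction, $(\bar Y^\alpha, \bar Z^\alpha)$ satisfies the supermartingale dynamics \eqref{Dynamic}. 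To conclude that $\bar Y_\tau^\alpha \in \Theta(\tau,\mu)$, I need to verify the weak constraint \eqref{weak_cond}. Using $\bar Y_\theta^\alpha \geq \Phi(\theta, M_\theta^{\tau,\mu,\alpha})$, the monotonicity of $\Psi$ in its last variable and the key inequality $\Psi(t,\omega,\Phi(t,\omega,x)) \geq x$ recalled in Assumption~2, I obtain
$$
\Psi(\theta, \bar Y_\theta^\alpha) \;\geq\; \Psi\bigl(\theta, \Phi(\theta, M_\theta^{\tau,\mu,\alpha})\bigr) \;\geq\; M_\theta^{\tau,\mu,\alpha} \quad \text{a.s.}
$$
Conditioning on $\mathcal{F}_\tau$ and using that $M^{\tau,\mu,\alpha}$ is a martingale with $M_\tau^{\tau,\mu,\alpha} = \mu$ gives $\mathbf{E}_\tau[\Psi(\theta, \bar Y_\theta^\alpha)] \geq \mu$ for every $\theta \in \mathcal{T}_\tau$. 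Hence $\bar Y_\tau^\alpha \in \Theta(\tau,\mu)$, so $\essinf \Theta(\tau,\mu) \leq \bar Y_\tau^\alpha$. Taking the essential infimum over $\alpha \in \mathbf{V}_{\tau,\mu}$ concludes the proof.

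The main delicate point will be to ensure that the reflected BSDE with obstacle $\Phi(\cdot, M^{\tau,\mu,\alpha})$ is well posed and that its solution admits the Snell-type representation; the obstacle takes values in $[0,1]$ (so is square-integrable) and $M^{\tau,\mu,\alpha}$ is continuous, but $\Phi$ is only left-continuous in its last argument, so invoking a version of the theory that accommodates optional (or merely left-upper-semicontinuous) obstacles will be needed. Once this is granted, the rest of the argument is a clean application of Proposition~\ref{res} combined with the left-inverse property of $\Phi$ relative to $\Psi$.
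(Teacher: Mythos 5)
Your proof is correct and follows essentially the same route as the paper: the first inequality is exactly the paper's application of Proposition~\ref{res}, and your second step merely fills in the details of the paper's one-line assertion that $\esssup_{\theta \in \mathcal{T}_\tau}\mathcal{E}^{g}_{\tau,\theta}[\Phi(\theta,M_\theta^{\tau,\mu,\alpha})]$ belongs to $\Theta(\tau,\mu)$ (cf.\ Remark~\ref{exist}), using the reflected-BSDE/Snell construction and the inequality $\Psi(t,\Phi(t,x))\geq x$. The regularity caveat you raise about the obstacle is legitimate and is handled in the paper by the continuity assumption on $\Phi$ imposed for the subsequent analysis.
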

\begin{proof} \quad\\
Let $Y_\tau \in \Theta(\tau,\mu)$. By Proposition \ref{res}, we obtain that there exists $\bar{\alpha} \in \mathscr{A}_{\tau,\mu}$ such that $Y_\tau \geq \underset{\theta \in \mathcal{T}_\tau}{\esssup}\,\mathcal{E}^{g}_{\tau,\theta}[\Phi(\theta,M_\theta^{\tau, \mu, \bar{\alpha}})]$ a.s., which clearly implies that $$Y_\tau \geq \underset{\alpha \in \mathscr{A}_{\tau, \mu}} {\essinf}\,\underset{\theta \in \mathcal{T}_\tau} {\esssup}\,\mathcal{E}^{g}_{\tau, \theta}[\Phi(\theta,M_\theta^{\tau, \mu, \alpha}))]=\mathcal{Y}(\tau,\mu) {\rm \,\, a.s.}$$ By arbitrariness of $Y_\tau$, we derive that $\essinf \Theta(\tau, \mu) \geq \mathcal{Y}(\tau, \mu)\,\,\,$ a.s.\\ Conversely, we have that, for each $\alpha \in \mathscr{A}_{\tau, \mu}$, $\underset{\theta \in \mathcal{T}_\tau} {\esssup}\,\mathcal{E}^{g}_{\tau, \theta}[\Phi(\theta,M_\theta^{\tau, \mu, \alpha})]$ belongs to $\Theta(\tau, \mu)$, which leads to $$\underset{\theta \in \mathcal{T}_\tau}{\esssup}\,\mathcal{E}^{g}_{\tau, \theta}[\Phi(\theta,M_\theta^{\tau, \mu, \alpha})]\geq \essinf \Theta(\tau, \mu) {\rm \,\, a.s.}$$ By taking the essential infimum on $\alpha \in \mathscr{A}_{\tau, \mu}$, the result follows.\fproof
\end{proof}
%\end{proof

%%%%%%%%%%%%%%%%%%%%
%%%%%%%%%%%%%%%%%%%%
%%%%%%%%%%%%%%%%%%%%
%\input{agregation.tex}
%%%%%%%%%%%%%%%%%%%%
%%%%%%%%%%%%%%%%%%%%
%%%%%%%%%%%%%%%%%%%%
\subsection{A reflected BSDE representation of the \textit{minimal} $\tau$-values family}
\label{characterization}
\noindent In this subsection, for an initial threshold $m_0$ at time $0$ and a given admissible control $\alpha \in \mathscr{A}_{0,m_0}$, we consider the dynamic 'threshold' ($M_t^{0,m_0,\alpha}$) and provide a reflected BSDE representation of the minimal $\tau$-values family $\mathcal{Y}(\tau,M_\tau^\alpha)$.\\

\noindent For ease of notations, we  fix $m_0 \in [0,1]$ and set
\begin{align}
\begin{cases}
M_t^{\alpha}:=M_t^{0,m_0, \alpha}, \,\,\, \mathscr{A}_\tau^\alpha:=\{\alpha' \in \mathscr{A}_{\tau, M_\tau^{\alpha}}: \,\,\, \alpha'=\alpha \,\,\, dt \times d\PB \text{ on } [\![0, \tau]\!]  \}, \\
\mathscr{A}_0:=\mathscr{A}_{0,m_0} \text{ and } \mathcal{Y}^{\alpha}(\tau):=\mathcal{Y}(\tau, M_\tau^{\alpha}) \text{ for } \alpha \in \mathscr{A}_0, t\in [0,T] \text{ and } \tau \in \mathcal{T}_0.
\end{cases}
\end{align}
{\color{black}We make here the following assumption concerning the regularity of the map $\Phi$, which will be used only in this Section.}
\begin{Assumption}\label{assumpt}
Assume that the map $\Phi$ is continuous with respect to $t$ and $m$. Moreover, suppose that, for each $\alpha \in \mathscr{A}_0$, the process $\Phi(\cdot,M_\cdot^\alpha)$ is a strong semimartingale.
\end{Assumption}

Using the characterization of the first component of the solution of a nonlinear reflected BSDE as the value of an optimal stopping with nonlinear BSDEs, we obtain that, for $\tau \in \mathcal{T}_0$ and $\mu \in \textbf{L}_0([0,1], \mathscr{F}_\tau)$,
\begin{align}\label{NewName}
\mathcal{Y}(\tau,\mu)=\underset{\alpha \in \mathscr{A}_{\tau,\mu}}\essinf\; \mathscr{Y}^{g,\alpha}_{\tau,T}[\Phi(T,M_T^{\tau, \mu,\alpha})],
\end{align}
where the operator $\mathscr{Y}^{g,\alpha}$ is associated to the obstacle process $\Phi(\cdot,M_\cdot^{\tau, \mu, \alpha})$.\\

In particular, for a given $\alpha \in \mathscr{A}_0$, $\mathcal{Y}^{\alpha}$ can be rewritten as follows:
\begin{align}\label{NewName}
\mathcal{Y}^{\alpha}(\tau)=\underset{\alpha' \in \mathscr{A}^\alpha_{\tau}}\essinf\; \mathscr{Y}^{g,\alpha}_{\tau,T}[\Phi(T,M_T^{\alpha})],
\end{align}
with $\mathscr{Y}^{g,\alpha}$ associated with the obstacle process $\Phi(\cdot,M_\cdot^{\alpha})$.

\begin{lemma}[Admissibility of the family $(\mathcal{Y}^\alpha(\tau))_{\tau \in \mathcal{T}_0}$] The family $(\mathcal{Y}^\alpha(\tau))_{\tau \in \mathcal{T}_0}$ is a \\
square-integrable admissible family.
\end{lemma}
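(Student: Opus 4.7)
\begin{Proof}[Plan of proof]
The goal is to verify the three clauses: $\mathcal{F}_\tau$-measurability of $\mathcal{Y}^\alpha(\tau)$, consistency of the family on $\{\tau=\tau'\}$, and membership in $\textbf{L}_2$.

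\medskip

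First I would handle $\mathcal{F}_\tau$-measurability. By construction
\[
\mathcal{Y}^\alpha(\tau)=\mathcal{Y}(\tau,M_\tau^\alpha)=\essinf_{\alpha'\in\textbf{V}_\tau^\alpha}\,\esssup_{\theta\in\mathcal{T}_\tau}\mathcal{E}^{g}_{\tau,\theta}\!\left[\Phi\!\left(\theta,M_\theta^{\tau,M_\tau^\alpha,\alpha'}\right)\right].
\]
For fixed $\alpha'\in\textbf{V}_\tau^\alpha$ and $\theta\in\mathcal{T}_\tau$, the random variable $\Phi(\theta,M_\theta^{\tau,M_\tau^\alpha,\alpha'})$ is $\mathcal{F}_\theta$-measurable, so by definition of the conditional $g$-expectation the quantity inside the brackets is $\mathcal{F}_\tau$-measurable. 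Since the essential supremum and essential infimum of a family of $\mathcal{F}_\tau$-measurable random variables is $\mathcal{F}_\tau$-measurable, the measurability claim follows directly.

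\medskip

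The central point is the compatibility relation $\mathcal{Y}^\alpha(\tau)=\mathcal{Y}^\alpha(\tau')$ a.s.\ on $A:=\{\tau=\tau'\}\in\mathcal{F}_\tau\cap\mathcal{F}_{\tau'}$. On $A$ we have $M_\tau^\alpha=M_{\tau'}^\alpha$ and $\mathcal{T}_\tau\cap A=\mathcal{T}_{\tau'}\cap A$, and the pasting property $\alpha''=\alpha'\mathbf{1}_A+\alpha\mathbf{1}_{A^c}$ shows that $\textbf{V}_\tau^\alpha$ and $\textbf{V}_{\tau'}^\alpha$ coincide on $A$ in the sense that any control in one can be modified outside $A$ to lie in the other without altering the value of the inner expression on $A$. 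Combined with the $A$-local character of $\mathcal{E}^{g}$ (conditional $g$-expectations split on $\mathcal{F}_\tau$-measurable sets, since the driver dependence on $(y,z)$ does not create inter-fiber interaction) and the corresponding splitting property of essential supremum/infimum, we obtain the desired equality of $\mathcal{Y}^\alpha(\tau)$ and $\mathcal{Y}^\alpha(\tau')$ on $A$.

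\medskip

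Finally for the square-integrability, I would use the reformulation
\[
\mathcal{Y}^\alpha(\tau)=\essinf_{\alpha'\in\textbf{V}_\tau^\alpha}\mathcal{R}\text{ef}^{\,g,\Phi^{\alpha'}}_{\,\tau,T}\!\left[\Phi\!\left(T,M_T^{\tau,M_\tau^\alpha,\alpha'}\right)\right],
\]
and bound any single element of the family. Since $\Phi$ takes values in $[0,1]$, the terminal condition and obstacle are both in $[0,1]$, so the standard a priori estimate for reflected BSDEs with Lipschitz driver $g$ satisfying $|g(\cdot,0,0)|\le\chi_g\in\textbf{L}_2$ yields a constant $C$ (depending only on $T$, $K_g$ and $\|\chi_g\|_{\textbf{L}_2}$) such that $\|\mathcal{R}\text{ef}^{\,g,\Phi^{\alpha'}}_{\,\tau,T}[\Phi(T,M_T^{\tau,M_\tau^\alpha,\alpha'})]\|_{\textbf{S}_2}\le C$ uniformly in $\alpha'$. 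Picking one particular admissible control (for instance the one used in Remark~\ref{exist}) gives a specific element in $\Theta(\tau,M_\tau^\alpha)$ bounded in $\textbf{L}_2$, and since $0\le\Phi\le 1$ combined with the comparison theorem also provides a lower bound in $\textbf{L}_2$, the essential infimum inherits square-integrability.

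\medskip

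The potentially delicate step is the consistency relation on $\{\tau=\tau'\}$, because it requires carefully invoking the pasting property of admissible controls in $\textbf{V}_\tau^\alpha$ and the $\mathcal{F}_\tau$-local behaviour of the conditional $g$-expectation; the measurability and integrability points are then essentially consequences of classical BSDE estimates.
\end{Proof}
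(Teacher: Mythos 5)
Your proposal is correct and follows essentially the same route as the paper: measurability from the definitions of $\mathcal{E}^g$ and of essential sup/inf, consistency on $\{\tau=\tau'\}$ via pasting of controls and stopping times together with the local property of the conditional $g$-expectation on $\mathcal{F}_\tau$-measurable sets (the paper writes this out explicitly as $\textbf{1}_B\,\mathcal{E}^{g}_{S',\theta}[\cdot]=\mathcal{E}^{g\textbf{1}_B}_{S',\theta}[\textbf{1}_B\,\cdot\,]$ with the pasted control $\alpha'_B$ and stopping time $\theta_B$), and square-integrability from standard a priori estimates using $0\le\Phi\le1$. Your integrability argument is in fact more detailed than the paper's one-line justification, but the overall strategy is the same.
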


\begin{proof}
For each $S \in \mathcal{T}_0$, $\mathcal{Y}^\alpha(S)$ is an $\mathscr{F}_S$-measurable square-integrable random variable, due to the definitions of the conditional $g$-expectation, essential supremum and infimum operators. Let $S$ and $S'$ be two stopping times in $\mathcal{T}_0$. We set $B:=\{S=S'\}$. We show that $\mathcal{Y}^\alpha(S)=\mathcal{Y}^\alpha(S')$  a.s. on $B$. For all $\theta \in \mathcal{T}_S$, set $\theta_B:=\theta \textbf{1}_{B}+T \textbf{1}_{B^c}$. We clearly have $\theta_B \in \mathcal{T}_{S'}$ and moreover $\theta_B=\theta$ a.s. on $B$.
We also fix $\alpha' \in \mathscr{A}^\alpha_{S'}$ and set $\alpha'_{B}:=\alpha \textbf{1}_{[\![0,S]\!]}+ \alpha' \textbf{1}_{]\!]S,T]\!]}\textbf{1}_{B}$. Clearly $\alpha'_B \in \mathscr{A}_S^\alpha$ and $\alpha'_B=\alpha'$ on $]\!]S',T]\!]\cap B$. By using the fact that $S=S'$ on $B$, as well as several properties of the $g$-expectation,  we obtain
\begin{align}
\textbf{1}_B\mathcal{E}_{S,\theta}^{g}[\Phi(\theta, M_{\theta}^{\alpha'_{B}})]&=\textbf{1}_B\mathcal{E}_{S',\theta}^{g}[\Phi(\theta, M_{\theta}^{\alpha'_{B}})]=\mathcal{E}_{S',\theta}^{g\textbf{1}_B}[\textbf{1}_B\Phi(\theta, M_{\theta}^{\alpha'_{B}})]=\mathcal{E}_{S',\theta}^{g\textbf{1}_B}[\textbf{1}_B\Phi(\theta_B, M_{\theta_B}^{{\alpha}_B^{'}})] \nonumber \\
& =\textbf{1}_B\mathcal{E}_{S',\theta}^{g}[\Phi(\theta_B, M_{\theta_B}^{\alpha'})] \leq \textbf{1}_B \underset{\theta \in \mathcal{T}_{S'}} \esssup \mathcal{E}^g_{S',\theta}[\Phi(\theta,M_\theta^{\alpha'})] {\rm \,\, a.s.}
\end{align}
By taking the essential supremum on $\theta \in \mathcal{T}_{S}$ and then the essential infimum on $\alpha' \in \mathscr{A}_{S'}^\alpha$, we get $\mathcal{Y}^\alpha(S) \leq \mathcal{Y}^{\alpha}(S')$ a.s. on $B$. By interchanging the roles of $S$ and $S'$, the converse inequality follows by the same arguments.
\fproof
\end{proof}

\noindent We now prove the existence of an optimizing sequence in the representation \reff{NewName}.

\begin{lemma}[Optimizing sequence]\label{TH}
Fix $ \tau \in \mathcal{T}_0$, $\theta \in \mathcal{T}_\tau$, $\mu \in \mathbf{L}_0([0,1], \mathscr{F}_\tau)$ and $\alpha \in \mathscr{A}_{\tau, \mu}$. Then there exists a sequence $(\alpha'_n) \subset \mathscr{A}_{\tau, \mu}^{\theta, \alpha}:=\{\alpha' \in \mathscr{A}_{\tau, \mu}, \alpha'\textbf{1}_{[0, \theta)}=\alpha\textbf{1}_{[0,\theta)}\}$ such that $$ \underset{n \rightarrow \infty}\lim \downarrow \mathscr{Y}_{\theta,T}^{g, {\alpha'_n}}[\Phi(T,M_T^{^{\tau,\mu,\alpha'_n}})]=\mathcal{Y}(\theta,M_\theta^{^{\tau,\mu,\alpha}})\ \ \  a.s.$$
\end{lemma}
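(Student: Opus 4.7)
The plan is to reduce the statement to the classical Neveu--Dellacherie--Meyer lemma, which asserts that any family of random variables that is essentially downward directed admits a monotone decreasing sequence converging a.s.\ to its essential infimum. Thus the task splits into: (a) identifying $\mathcal{Y}(\theta,M_\theta^{\tau,m,\alpha})$ with the essential infimum of the family
$\mathcal{F} := \{\mathcal{R}{\rm ef}_{\theta,T}^{g,\Phi^{\alpha'}}[\Phi(T,M_T^{\tau,m,\alpha'})] : \alpha' \in \mathbf{V}_{\tau,m}^{\theta,\alpha}\}$, and (b) showing that $\mathcal{F}$ is stable under pairwise minimum.

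For (a), by the representation of $\mathcal{Y}$ obtained at the end of Section~2.2, $\mathcal{Y}(\theta, M_\theta^{\tau,m,\alpha})$ is the essential infimum over $\beta\in\mathbf{V}_{\theta,M_\theta^{\tau,m,\alpha}}$ of the quantity $\mathcal{R}{\rm ef}_{\theta,T}^{g,\Phi^{\beta}}[\Phi(T,M_T^{\theta,M_\theta^{\tau,m,\alpha},\beta})]$. The concatenation map $\beta\mapsto \alpha\mathbf{1}_{[0,\theta)}+\beta\mathbf{1}_{[\theta,T]}$ is a bijection from $\mathbf{V}_{\theta,M_\theta^{\tau,m,\alpha}}$ onto $\mathbf{V}_{\tau,m}^{\theta,\alpha}$, and the associated controlled martingales coincide on $[\theta,T]$; consequently the two essential infima are equal.

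For (b), fix $\alpha^1,\alpha^2\in\mathbf{V}_{\tau,m}^{\theta,\alpha}$ and set $V_i := \mathcal{R}{\rm ef}_{\theta,T}^{g,\Phi^{\alpha^i}}[\Phi(T,M_T^{\tau,m,\alpha^i})]$ for $i=1,2$. Introduce $B := \{V_1\leq V_2\}\in\mathcal{F}_\theta$ and the pasted control
$$\bar\alpha \;:=\; \alpha\,\mathbf{1}_{[0,\theta]} \;+\; (\alpha^1\mathbf{1}_B + \alpha^2 \mathbf{1}_{B^c})\,\mathbf{1}_{(\theta,T]}.$$
Since $B\in\mathcal{F}_\theta$, the indicator $\mathbf{1}_B$ commutes with the stochastic integral starting at $\theta$, and one gets $M_t^{\tau,m,\bar\alpha} = \mathbf{1}_B M_t^{\tau,m,\alpha^1}+\mathbf{1}_{B^c} M_t^{\tau,m,\alpha^2}\in[0,1]$ for $t\geq \theta$. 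Hence $\bar\alpha\in \mathbf{V}_{\tau,m}^{\theta,\alpha}$ and $\Phi^{\bar\alpha}_{s}=\mathbf{1}_B\Phi^{\alpha^1}_s+\mathbf{1}_{B^c}\Phi^{\alpha^2}_s$ for $s\geq \theta$. Denoting by $(Y^i,Z^i,A^i)$ the solution of the reflected BSDE with driver $g$, obstacle $\Phi^{\alpha^i}$ and terminal value $\Phi(T,M_T^{\tau,m,\alpha^i})$, the pasted triple $\bigl(\mathbf{1}_B Y^1+\mathbf{1}_{B^c}Y^2,\;\mathbf{1}_BZ^1+\mathbf{1}_{B^c}Z^2,\;\mathbf{1}_BA^1+\mathbf{1}_{B^c}A^2\bigr)$ is easily checked to solve the reflected BSDE with data $(\Phi^{\bar\alpha},\Phi(T,M_T^{\tau,m,\bar\alpha}))$, using the pointwise identity $g(t,\mathbf{1}_B y_1+\mathbf{1}_{B^c}y_2,\mathbf{1}_B z_1+\mathbf{1}_{B^c}z_2)=\mathbf{1}_B g(t,y_1,z_1)+\mathbf{1}_{B^c}g(t,y_2,z_2)$. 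Uniqueness then yields $\mathcal{R}{\rm ef}_{\theta,T}^{g,\Phi^{\bar\alpha}}[\Phi(T,M_T^{\tau,m,\bar\alpha})] = \mathbf{1}_B V_1+\mathbf{1}_{B^c}V_2 = V_1\wedge V_2$, proving (b).

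The main delicate point is the verification that the pasted triple genuinely solves the reflected BSDE associated with $\bar\alpha$; in particular that the Skorokhod minimality condition $\int_\theta^T(\bar Y_{s^-}-\Phi^{\bar\alpha}_{s^-})d\bar A_s=0$ persists under pasting. This reduces, on each of the $\mathcal{F}_\theta$-measurable sets $B$ and $B^c$, to the corresponding minimality condition for the solution of the unpasted problem, since both the value process and the obstacle agree there. Once both (a) and (b) are established, the Neveu--Dellacherie--Meyer selection yields a sequence $(\alpha'_n)\subset\mathbf{V}_{\tau,m}^{\theta,\alpha}$ along which $\mathcal{R}{\rm ef}_{\theta,T}^{g,\Phi^{\alpha'_n}}[\Phi(T,M_T^{\tau,m,\alpha'_n})]$ decreases a.s.\ to $\mathcal{Y}(\theta,M_\theta^{\tau,m,\alpha})$, which is the conclusion of the lemma.
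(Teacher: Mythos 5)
Your proof is correct and takes essentially the same approach as the paper: both establish that the family $\{\mathcal{R}{\rm ef}_{\theta,T}^{g,\Phi^{\alpha'}}[\Phi(T,M_T^{\tau,m,\alpha'})],\ \alpha'\in\mathbf{V}_{\tau,m}^{\theta,\alpha}\}$ is directed downward by pasting two controls on the $\mathcal{F}_\theta$-measurable set where one value dominates, exploiting the locality of the reflected BSDE operator with respect to such sets, and then invoke the classical selection result for downward directed families. You additionally make explicit the identification of $\mathcal{Y}(\theta,M_\theta^{\tau,m,\alpha})$ with the essential infimum over the concatenated controls and the verification (via uniqueness) that the pasted triple solves the reflected BSDE, points the paper's proof leaves implicit.
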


\begin{proof}
To prove the result, we have to show that the family $\{J(\alpha'):= \mathscr{Y}_{\theta,T}^{g, {\alpha'}}[\Phi(T,M_T^{^{\tau,\mu,\alpha'}})],$ $\alpha' \in \mathscr{A}_{\tau,\mu}^{\theta,\alpha}  \}$ is downward directed {\color{black}; see, for example, \cite{N75}}. Fix $\alpha_1', \alpha_2' \in \mathscr{A}_{\tau,\mu}^{\theta,\alpha}$ and set
$$\Tilde{\alpha}':= \alpha\textbf{1}_{[0,\theta)}+\textbf{1}_{[\theta,T]}(\alpha_1' \textbf{1}_A+\alpha_2' \textbf{1}_{A^c}),$$
where $A:=\{ J(\alpha_1') \leq J(\alpha_2')\} \in \mathscr{F}_\theta$, which implies that $\Tilde{\alpha}' \in \mathscr{A}_{\tau,\mu}^{\theta,\alpha}$ and, since $A \in \mathscr{F}_\theta$,
\begin{align*}
J(\Tilde{\alpha}')&= \mathscr{Y}^{g, {\Tilde{\alpha}'}}_{\theta,T}[\Phi(T,M_T^{^{\tau,\mu,\alpha_{1}'}}) \textbf{1}_A+\Phi(T,M_T^{^{\tau,\mu,\alpha_{2}'}})\textbf{1}_{A^c}]\\
&=\textbf{1}_A\mathscr{Y}^{g, {\alpha_{1}'}}_{\theta,T}[\Phi(T,M_T^{^{\tau,\mu,\alpha_{1}'}})]+\textbf{1}_{A^c}\mathscr{Y}^{g, {\alpha_{2}'}}_{\theta,T}[\Phi(T,M_T^{^{\tau,\mu,\alpha_{2}'}})] \nonumber \\
&= \min(J(\alpha_1'), J(\alpha_2')).
\end{align*}
This gives the desired result. \fproof 
\end{proof}

%\subsection{Representation of the value family as a $\mathscr{Y}^{g,\xi}$-submartingale system}

%\noindent Let us now introduce the notion of $\mathscr{Y}^{g,\xi}$-submartingale system (resp.  $\mathscr{Y}^{g,\xi}$-martingale system).

%\begin{definition}

%An admissible family $(X(\tau), \tau \in \mathcal{T}_0)$ is said to be a $\mathscr{Y}^{g,\xi}$-{\rm submartingale family} (resp. a $\mathscr{Y}^{g,\xi}$-{\rm martingale family}) if for each $\tau \in \mathcal{T}_0$, $X(\tau) \in \mathcal{L}_2^{\xi,\tau}$ and all $\tau, \sigma \in \mathcal{T}_0$ such that $\sigma\in \mathcal{T}_\tau$ a.s.,
%$$X(\tau)  \leq \mathscr{Y}^{g,\xi}_{\tau,\sigma}[X(\sigma)]\,\, {\rm a.s.,}\,\,\, {\rm (resp. }  X(\tau) = \mathscr{Y}^{g,\xi}_{\tau,\sigma}[X(\sigma)]\,) {\,\,\rm a.s.}$$

%\end{definition}

\noindent We now proceed to show that, for each $\alpha \in \mathscr{A}_0$, the family $(\mathcal{Y}^\alpha(\tau), \,\, \tau \in \mathcal{T}_0)$ is a $\mathscr{Y}^{g,\alpha}$- submartingale family. This is a direct consequence of the following dynamic programming principle.

\begin{theorem}[\textit{Dynamic programming principle}]\label{THH}
The value family   satisfies the following dynamic programming principle: for all $(\tau_1, \tau_2, \alpha) \in \mathcal{T}_0 \times \mathcal{T}_0 \times \mathscr{A}_0$ such that  %$\tau_1 \leq \tau_2$, 
$\tau_2\in \mathcal{T}_{\tau_1}$ a.s., we have:
\begin{align}
\mathcal{Y}^{\alpha}({\tau_1})= \essinf_{\alpha' \in \mathscr{A}_{\tau_1}^\alpha} \mathscr{Y}_{\tau_1, \tau_2}^{\,g, {\alpha'}}[\mathcal{Y}^{\alpha'}(\tau_2)] {\rm \,\, a.s.}
\end{align}
%where
%$$\mathscr{Y}_{\tau_1, \tau^2}^{\,g, {\alpha'}}[\mathcal{Y}^{\alpha'}(\tau_2)]=\esssup_{\theta \in \mathcal{T}_{\tau_1}} \mathcal{E}^{g}_{\tau_1, \tau_2 \wedge \theta} \left[\mathcal{Y}^{\alpha'}(\tau_2)\textbf{1}_{ \theta\geq\tau_2}+\Phi(\theta, M_{\theta}^{\alpha'})\textbf{1}_{\theta<\tau_2}\right].$$
\end{theorem}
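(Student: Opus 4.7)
The plan is to prove the two inequalities separately. The key tools are: (a) the flow (consistency) and monotonicity of the operator $\mathcal{R}{\rm ef}^{g,\xi}$; (b) the inclusion $\mathbf{V}_{\tau_2}^{\alpha'} \subset \mathbf{V}_{\tau_1}^{\alpha}$ whenever $\alpha' \in \mathbf{V}_{\tau_1}^{\alpha}$ and $\tau_1 \leq \tau_2$ (immediate from the definitions, since any control matching $\alpha'$ on $[\![0,\tau_2]\!]$ a fortiori matches $\alpha$ on $[\![0,\tau_1]\!]$); and (c) the downward optimizing sequence from Lemma~\ref{TH}.

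For the direction ``$\geq$'', fix $\alpha' \in \mathbf{V}_{\tau_1}^{\alpha}$. Since $\alpha' \in \mathbf{V}_{\tau_2}^{\alpha'}$, the essinf defining $\mathcal{Y}^{\alpha'}(\tau_2)$ is dominated by its value at $\alpha'$, namely $\mathcal{Y}^{\alpha'}(\tau_2) \leq \mathcal{R}{\rm ef}^{g,\Phi^{\alpha'}}_{\tau_2,T}[\Phi(T,M_T^{\alpha'})]$ a.s. Applying the monotone operator $\mathcal{R}{\rm ef}^{g,\Phi^{\alpha'}}_{\tau_1,\tau_2}$ and invoking consistency,
\begin{equation*}
\mathcal{R}{\rm ef}^{g,\Phi^{\alpha'}}_{\tau_1,\tau_2}\bigl[\mathcal{Y}^{\alpha'}(\tau_2)\bigr] \;\leq\; \mathcal{R}{\rm ef}^{g,\Phi^{\alpha'}}_{\tau_1,\tau_2}\Bigl[\mathcal{R}{\rm ef}^{g,\Phi^{\alpha'}}_{\tau_2,T}\bigl[\Phi(T,M_T^{\alpha'})\bigr]\Bigr] \;=\; \mathcal{R}{\rm ef}^{g,\Phi^{\alpha'}}_{\tau_1,T}\bigl[\Phi(T,M_T^{\alpha'})\bigr] \quad \text{a.s.}
\end{equation*}
Taking the essential infimum over $\alpha' \in \mathbf{V}_{\tau_1}^{\alpha}$ recovers $\mathcal{Y}^{\alpha}(\tau_1)$ on the right and yields the inequality.

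For the opposite direction ``$\leq$'', fix $\alpha' \in \mathbf{V}_{\tau_1}^{\alpha}$ and apply Lemma~\ref{TH} at $\tau_2$ with reference control $\alpha'$: it produces a sequence $(\alpha''_n) \subset \mathbf{V}_{\tau_2}^{\alpha'}$ with $\alpha''_n \equiv \alpha'$ on $[0,\tau_2)$ and $\mathcal{R}{\rm ef}^{g,\Phi^{\alpha''_n}}_{\tau_2,T}[\Phi(T,M_T^{\alpha''_n})] \downarrow \mathcal{Y}^{\alpha'}(\tau_2)$ a.s. Because $\alpha''_n \equiv \alpha'$ up to $\tau_2$, the martingales $M^{\alpha''_n}$ and $M^{\alpha'}$, and hence the obstacles $\Phi^{\alpha''_n}$ and $\Phi^{\alpha'}$, agree on $[\![\tau_1,\tau_2]\!]$, so on that interval the operators $\mathcal{R}{\rm ef}^{g,\Phi^{\alpha''_n}}_{\tau_1,\tau_2}$ and $\mathcal{R}{\rm ef}^{g,\Phi^{\alpha'}}_{\tau_1,\tau_2}$ coincide. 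Since $\alpha''_n \in \mathbf{V}_{\tau_1}^{\alpha}$ by the inclusion noted above, the flow property gives
\begin{equation*}
\mathcal{Y}^{\alpha}(\tau_1) \;\leq\; \mathcal{R}{\rm ef}^{g,\Phi^{\alpha''_n}}_{\tau_1,T}\bigl[\Phi(T,M_T^{\alpha''_n})\bigr] \;=\; \mathcal{R}{\rm ef}^{g,\Phi^{\alpha'}}_{\tau_1,\tau_2}\Bigl[\mathcal{R}{\rm ef}^{g,\Phi^{\alpha''_n}}_{\tau_2,T}\bigl[\Phi(T,M_T^{\alpha''_n})\bigr]\Bigr] \quad \text{a.s.}
\end{equation*}
Letting $n\to\infty$ and invoking the monotone stability of reflected BSDEs with respect to decreasing $L^2$ sequences of terminal conditions (with fixed obstacle $\Phi^{\alpha'}$) yields $\mathcal{Y}^{\alpha}(\tau_1) \leq \mathcal{R}{\rm ef}^{g,\Phi^{\alpha'}}_{\tau_1,\tau_2}[\mathcal{Y}^{\alpha'}(\tau_2)]$; taking the essinf over $\alpha' \in \mathbf{V}_{\tau_1}^{\alpha}$ closes the argument.

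The main obstacle is this final limiting step: it relies on a continuity/stability property of $\mathcal{R}{\rm ef}^{g,\Phi^{\alpha'}}_{\tau_1,\tau_2}$ along a monotone decreasing sequence of terminal data, which in turn rests on classical \emph{a priori} $L^2$-estimates for reflected BSDEs with fixed obstacle. Checking that the limit $\mathcal{Y}^{\alpha'}(\tau_2)$ is an admissible terminal datum (that is, square-integrable and $\geq \Phi(\tau_2,M_{\tau_2}^{\alpha'})$) is immediate because each term in the approximating sequence already dominates this common obstacle value at $\tau_2$.
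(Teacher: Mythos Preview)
Your proof is correct and follows essentially the same approach as the paper: for the ``$\geq$'' direction you use the flow property together with the comparison theorem for reflected BSDEs, and for the ``$\leq$'' direction you use the optimizing sequence from Lemma~\ref{TH} combined with the continuity of the reflected BSDE operator with respect to the terminal condition. The only differences are cosmetic---you spell out explicitly the inclusion $\mathbf{V}_{\tau_2}^{\alpha'} \subset \mathbf{V}_{\tau_1}^{\alpha}$, the coincidence of the obstacles $\Phi^{\alpha''_n}$ and $\Phi^{\alpha'}$ on $[\![\tau_1,\tau_2]\!]$, and the admissibility of the limiting terminal datum---points the paper leaves implicit.
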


\begin{proof}\quad\\
Let us first show that
\begin{align*}
\mathcal{Y}^{\alpha}({\tau_1}) \geq  \essinf_{\alpha' \in \mathscr{A}_{\tau_1}^\alpha} \esssup_{\theta \in \mathcal{T}_{\tau_1}} \mathcal{E}^{g}_{\tau_1, \tau_2 \wedge \theta} \left[\mathcal{Y}^{\alpha'}(\tau_2)\textbf{1}_{ \theta\geq\tau_2}+\Phi(\theta, M_{\theta}^{\alpha'})\textbf{1}_{\theta<\tau_2}\right]\,\,  {\rm a.s.}
\end{align*}
%\begin{align}
%\mathcal{Y}_{\tau_1}^{\alpha}=\essinf_{\alpha' \in \mathscr{A}_{\tau_1}^\alpha}[\Phi(M_s^{\alpha'})\textbf{1}_{\tau_1 \leq s \leq T}].
%\end{align}
Fix $\alpha' \in \mathscr{A}_{\tau_1}^\alpha$. By the flow property for reflected BSDEs we obtain:
\begin{align*}
\mathscr{Y}^{\,g, {\alpha'}}_{\tau_1,T}\left[\Phi(T,M_T^{\alpha'})\right]=\mathscr{Y}^{\,g, {\alpha'}}_{\tau_1,\tau_2}\left[\mathscr{Y}^{g, {\alpha'}}_{\tau_2,T}[\Phi(T, M_T^{\alpha'})]\right]\,\,\, {\rm a.s.}
\end{align*}
Using a comparison argument for reflected BSDEs together with \reff{NewName}, we get:
\begin{align*}
\mathscr{Y}^{\,g, {\alpha'}}_{\tau_1,T}\left[\Phi(T,M_T^{\alpha'})\right] \geq \mathscr{Y}^{\,g, {\alpha'}}_{\tau_1,\tau_2}\left[\mathcal{Y}^{\alpha'}(\tau_2)\right] \,\,\, {\rm a.s.}
\end{align*}
By arbitrariness of $\alpha' \in \mathscr{A}_{\tau_1}^\alpha$, we finally obtain:
\begin{align*}
\mathcal{Y}^\alpha({\tau_1}) \geq \essinf_{\alpha' \in \mathscr{A}_{\tau_1}^\alpha} \mathscr{Y}^{\,g, {\alpha'}}_{\tau_1,\tau_2} \left[\mathcal{Y}^{\alpha'}(\tau_2)\right] {\rm \,\, a.s.}
\end{align*}
Conversely, we prove that
\begin{align}
\mathcal{Y}^{\alpha}({\tau_1}) \leq  \essinf_{\alpha' \in \mathscr{A}_{\tau_1}^\alpha} \esssup_{\theta \in \mathcal{T}_{\tau_1}} \mathcal{E}^{g}_{\tau_1, \tau_2 \wedge \theta} \left[\mathcal{Y}^{\alpha'}(\tau_2)\textbf{1}_{ \theta\geq\tau_2}+\Phi(\theta, M_{\theta}^{\alpha'})\textbf{1}_{\theta<\tau_2}\right].\end{align}
By Lemma $\ref{TH}$, there exists a sequence of controls $\alpha^n \in \mathscr{A}_{\tau_2}^{\alpha'}$ such that:
\begin{align*}
\mathcal{Y}^{\alpha'}({\tau_2})=\underset{n \rightarrow \infty} {\lim}\mathscr{Y}^{g, {\alpha^n}}_{\tau_2,T} \left[ \Phi(T,M_T^{\alpha^n})\right] {\rm \,\, a.s.}
\end{align*}
The continuity of the reflected BSDEs with respect to its terminal condition gives:
%\begin{align}
%Y_{\tau_1}\left[\Phi(M_s)   \right]
%\end{align}
\begin{align*}
\mathscr{Y}_{\tau_1, \tau_2}^{g, {\alpha'}}\left[\mathcal{Y}^{\alpha'}({\tau_2})\right]= \underset{n \rightarrow \infty} {\lim}\mathscr{Y}_{\tau_1, \tau_2}^{g, {\alpha'}} \left[\mathscr{Y}^{g, {\alpha^n}}_{\tau_2,T} \left[ \Phi(T,M_T^{\alpha^n})\right]\right] {\rm \,\, a.s.}
\end{align*}
We set:
\begin{align*}
\Tilde{\alpha}_s^{n}:=\alpha'_s \textbf{1}_{s<\tau_2}+\alpha_s^n\textbf{1}_{s\geq \tau_2}.
\end{align*}
The two above relations and the consistency of the operator $\mathscr{Y}^{g, {\alpha'}}$ finally give:
\begin{align*}
\mathscr{Y}_{\tau_1, \tau_2}^{g, {\alpha'}}\left[\mathcal{Y}^{\alpha'}({\tau_2})\right]= \underset{n \rightarrow \infty} {\lim}\mathscr{Y}_{\tau_1, T}^{g, {\Tilde{\alpha}^n}}\left[\Phi(T,M_T^{\Tilde{\alpha}^n})\right] \geq \mathcal{Y}^{\alpha'}({\tau_1}) {\rm \,\, a.s.}
\end{align*}
Now, by arbitrariness of $\alpha' \in \mathscr{A}_\tau^\alpha$, the result follows.
\fproof
\end{proof}

We finally prove the existence of a RCLL process which aggregates the value family $(\mathcal{Y}^\alpha).$
\begin{theorem}[\textit{Existence of a RCLL aggregator process of the value family}]\label{aggregator}
For any $\alpha \in \mathscr{A}_0$, there exists a RCLL process $(\mathcal{Y}^\alpha_t)$ which aggregates the family $(\mathcal{Y}^{\alpha}(\tau), \,\, \tau \in \mathcal{T}_0)$, that is $\mathcal{Y}^\alpha(\tau)=\mathcal{Y}_\tau^\alpha$ a.s., for all $\tau \in \mathcal{T}_0$. 
\end{theorem}

\begin{proof}
 Fix $\alpha \in \mathscr{A}_0$. 
By Theorem \ref{THH}, we get that the family $(\mathcal{Y}^\alpha(\tau))_{\tau \in \mathcal{T}_0}$ is a $\mathscr{Y}^{g,\alpha}$-submartingale family. By Theorem \ref{agreg}, we deduce that there exists an optional process $(\mathcal{Y}^\alpha_t)$ which aggregates the family $(\mathcal{Y}^\alpha(\tau))_{\tau \in \mathcal{T}_0}$ and satisfies $\mathbb{E}[\underset{\tau \in \mathcal{T}_0}{\esssup}(\mathcal{Y}^\alpha_\tau)^2]<\infty$. We can thus use Theorem $\ref{eee1}$, which shows that  $(\mathcal{Y}^\alpha_t)$ admits a $\mathscr{Y}^{g,\alpha}$-Mertens decomposition, giving the existence of its left and right limits. Define the process:
\begin{align*}
\overline{\mathcal{Y}}_t^\alpha:= \underset{s \in (t,T] \downarrow t} {\lim}\mathcal{Y}_s^\alpha, \,\,\, t \in [0,T].
\end{align*}
In order to show that the process $\mathcal{Y}^\alpha$ is indistinguishable of a RCLL process, we need to prove that 
\begin{align*}
\overline{\mathcal{Y}}_\tau^\alpha= \mathcal{Y}_\tau^\alpha {\rm \,\,\,  a.s., \,\, for \,\, all\,\,} \tau \in \mathcal{T}_0.
\end{align*}
Let us introduce $(\tau_n)_{n \in \NB}$, a non-increasing sequence of stopping times with values in $[0,T]$ such that $\tau_n \downarrow \tau$ a.s. as $n \rightarrow +\infty$. By the definition of the process $\overline{\mathcal{Y}}^\alpha$, we have 
\begin{align}\label{rel1}
\overline{\mathcal{Y}}_\tau^\alpha= \underset{n \rightarrow \infty}{\lim} \mathcal{Y}^\alpha_{\tau_n}\  {\rm a.s.}
\end{align}

%\vspace{5mm}
\noindent \textit{Step 1.} Let us first show the inequality  $\overline{\mathcal{Y}}^\alpha_{\tau}\geq \underset{\alpha' \in \mathscr{A}_\tau^\alpha}{\essinf} \,\mathscr{Y}_{\,\tau, \theta}^{\,g, \alpha'} \left[\Phi(\theta, M_\theta^{\alpha'})\right]=\mathcal{Y}^\alpha_\tau$.\\
By the continuity property of BSDEs with respect to the terminal time and terminal condition, we have  $\underset{n \rightarrow \infty} \lim \mathscr{Y}_{\tau, \tau_n}^{g, \alpha}[\mathcal{Y}_{\tau_n}^\alpha]=\overline{\mathcal{Y}}_\tau^\alpha$ a.s. Then, by Theorem \ref{THH}, we get $\mathscr{Y}_{\tau, \tau_n}^{g, \alpha}[\mathcal{Y}_{\tau_n}^\alpha] \geq \mathcal{Y}_\tau^\alpha$ a.s. The result follows. \\

\noindent \textit{Step 2.} It remains to show that $\overline{\mathcal{Y}}^\alpha_\tau\leq \underset{\alpha' \in \mathscr{A}_\tau^\alpha} {\essinf}\, \mathscr{Y}_{\tau,T}^{g, \alpha'} \left[\Phi(T, M_T^{\alpha'})\right]=\mathcal{Y}^\alpha_\tau {\rm\,\, a.s.}$ \\

\noindent In order to prove it, fix $\alpha' \in \mathscr{A}_\tau^\alpha$ and set
$$\lambda_n:= \left(\frac{M_{\tau_n}^\alpha}{M_{\tau_n}^{\alpha'}} \wedge \frac{1-M_{\tau_n}^\alpha}{1-M_{\tau_n}^{\alpha'}}\right)
{\textbf{1}_{\{M_{\tau_n}^{\alpha'} \notin \{0,1\}\}}} \in [0,1].$$
We set $\alpha'_n:= \alpha \textbf{1}_{[0,\tau_n)}+\lambda_n \alpha' \textbf{1}_{[\tau_n,T]}.$ This implies that $\alpha'_n$ belongs to $\mathscr{A}_{\tau_n}^\alpha$.\\
\noindent Now,  relation \eqref{rel1} together with the $\mathscr{F}_\tau$-measurability of $\underset{n \rightarrow \infty}{\lim} \mathcal{Y}_{\tau_n}^\alpha$ and the continuity of BSDEs with respect to the terminal time and terminal condition give:
\begin{align}\label{e1}
\overline{\mathcal{Y}}_\tau^\alpha {\color{black}=} \,\mathcal{E}^{g}_{\tau,\tau}[\underset{n \rightarrow \infty}{\lim} \mathcal{Y}_{\tau_n}^\alpha]=\underset{n \rightarrow \infty}{\lim} \mathcal{E}^{g}_{\tau,\tau_n}[\mathcal{Y}^\alpha_{\tau_n}] \,\,\, {\rm a.s.}
\end{align}
Using standard results from the optimal stopping theory, there exists an optimal stopping time $\hat{\theta}_n \in \mathcal{T}_{\tau_n}$ for the optimal stopping problem $\underset{\theta \in \mathcal{T}_{\tau_n}}{\esssup}\,\mathcal{E}^g_{\tau_n,\theta}\left[\Phi(\theta, M_\theta^{\alpha_n'})\right].$
We thus derive 
\begin{eqnarray*} 
\mathcal{E}^{g}_{\tau,\tau_n}[\mathcal{Y}^\alpha_{\tau_n}] &\leq& \mathcal{E}^{g}_{\tau, \tau_n}\Big [\esssup_{\theta \in \mathcal{T}_{\tau_n}} \mathcal{E}^g_{\tau_n,\theta}\Big[\Phi\big(\theta, M_\theta^{\alpha_n'})\big]\Big]\\
&=&\mathcal{E}^{g}_{\tau, \tau_n}\left[\mathcal{E}^g_{\tau_n,\hat{\theta}_n}\left[\Phi(\hat{\theta}_n, M_{\hat{\theta}_n}^{\alpha_n'})\right]\right] = \mathcal{E}_{\tau,\hat{\theta}_n}^g\left[\Phi(\hat{\theta}_n, M_{\hat{\theta}_n}^{\alpha_n'})\right] {\rm a.s.},
\end{eqnarray*}
where the first inequality follows by admissibility of the control $\alpha_n'.$
Furthermore, we get 
\begin{align}\label{i1}
\mathcal{E}_{\tau,\hat{\theta}_n}^g\left[\Phi(\hat{\theta}_n, M_{\hat{\theta}_n}^{\alpha_n'})\right]=\mathcal{E}_{\tau,\hat{\theta}_n}^g\left[\Phi(\hat{\theta}_n, M_{\hat{\theta}_n}^{\alpha_n'})\right]-\mathcal{E}_{\tau,\hat{\theta}_n}^g\left[\Phi(\hat{\theta}_n, M_{\hat{\theta}_n}^{\alpha'})\right]+\mathcal{E}_{\tau,\hat{\theta}_n}^g\left[\Phi(\hat{\theta}_n, M_{\hat{\theta}_n}^{\alpha'})\right].
\end{align}
Since $\hat{\theta}_n \in \mathcal{T}_{\tau_n} \subset \mathcal{T}_{\tau}$, we have
\begin{align}\label{i2}
\mathcal{E}_{\tau,\hat{\theta}_n}^g\left[\Phi(\hat{\theta}_n, M_{\hat{\theta}_n}^{\alpha_n'})\right] \leq \mathcal{E}_{\tau,\hat{\theta}_n}^g\left[\Phi(\hat{\theta}_n, M_{\hat{\theta}_n}^{\alpha_n'})\right]-\mathcal{E}_{\tau,\hat{\theta}_n}^g\left[\Phi(\hat{\theta}_n, M_{\hat{\theta}_n}^{\alpha'})\right]+\esssup_{\theta \in \mathcal{T}_\tau} \mathcal{E}_{\tau,\theta}^g\left[\Phi({\theta}, M_{{\theta}}^{\alpha'})\right] {\rm \,\, a.s.}
\end{align}
Now, by using the {\it a priori} estimates with BSDEs we have:
\begin{align}\label{i3}
\mathbb{E}\left[\left| \mathcal{E}_{\tau,\hat{\theta}_n}^g\left[\Phi(\hat{\theta}_n, M_{\hat{\theta}_n}^{\alpha_n'})\right]-\mathcal{E}_{\tau,\hat{\theta}_n}^g\left[\Phi(\hat{\theta}_n, M_{\hat{\theta}_n}^{\alpha'})\right] \right|^2 \right] \leq C \mathbb{E} \left[ \left(\Phi(\hat{\theta}_n, M_{\hat{\theta}_n}^{\alpha_n'})-\Phi(\hat{\theta}_n, M_{\hat{\theta}_n}^{\alpha'})\right)^2\right]
\end{align}
$$ \leq C \mathbb{E}\left[\sup_{0 \leq t \leq T}\left(\Phi(t, M_{t}^{\alpha_n'})-\Phi(t, M_{t}^{\alpha'})\right)^2\right].$$
One can easily show that $M_T^{\alpha_n'} \rightarrow M_T^{\alpha'}$ a.s. when $n \rightarrow \infty$ {\color{black}(see the proof of Step 1 b) of Proposition 4.2 in \cite{BER15})}. By applying  Doob's inequality and Lebesgue's Theorem, and using the uniform continuity of $\Phi$, we derive that  
\begin{align}\label{i4}
\mathbb{E}\left[\underset{0 \leq t \leq T}{\sup}\left(\Phi(t, M_{t}^{\alpha_n'})-\Phi(t, M_{t}^{\alpha'})\right)^2\right] \rightarrow 0 {\rm \,\,when\,\,} n \rightarrow \infty.
\end{align}
\noindent Finally, by combining $\eqref{e1}, \eqref{i1}, \eqref{i2}, \eqref{i3},\eqref{i4}$ and taking the limit in $n$, the result follows.
\fproof
\end{proof}

We now prove the representation of the minimal $t$-values process $\mathcal{Y}^\alpha_\cdot$ in terms of a reflected  Backward SDE.

\begin{Theorem}[Reflected BSDE representation of the minimal $t$-values process]\label{BSDE representation}

There exists a family $(\mathcal{Z}^\alpha,\mathcal{A}^\alpha, \mathcal{K}^\alpha)_{\alpha \in \mathscr{A}_0}$ $\subset \mathbf{H}_2 \times \mathbf{K}_2 \times \mathbf{K}_2$
such that, for all $\alpha \in \mathscr{A}_0$, we have, for all $0 \leq t \leq T$,

\begin{align}
%\begin{cases}
 &\mathcal{Y}_t^\alpha=\Phi(T,M_T^\alpha)+\int_t^Tg(s,\mathcal{Y}_s^\alpha,\mathcal{Z}_s^\alpha)ds-\int_t^T\mathcal{Z}_s^\alpha dW_s+\mathcal{K}^\alpha_t-\mathcal{K}_T^\alpha-\mathcal{A}^\alpha_t+\mathcal{A}_T^\alpha; \label{eqqq2} \\
& \mathcal{Y}_t^{\alpha} \geq \Phi(t, M_t^{\alpha}) {\rm \,\, a.s.};  \label{eqq3}\\
&\int_0^{T}\left(\mathcal{Y}_{s^-}^\alpha-\Phi(s,M_{s^-}^\alpha)\right)d\mathcal{A}_s^\alpha=0 \text{\,\, a.s.};\,\,\, d\mathcal{A}^\alpha \perp d \mathcal{K}^\alpha; \label{eqqq4} \\
& \underset{\alpha' \in \mathscr{A}_\tau^\alpha}{\essinf}\,\mathbb{E}[\int_\tau^T \exp (\delta_s^{\tau, \alpha'})d(A_s^{\alpha'}-\mathcal{A}_s^{\alpha'}+\mathcal{K}_s^{\alpha'})]=0 {\rm \,\, a.s., \,\, for\,\, all \,\,} \tau \in \mathcal{T}_0;   \label{eqqq5} \\
&
(\mathcal{Y}^\alpha,\mathcal{Z}^\alpha,\mathcal{K}^\alpha,\mathcal{A}^\alpha)\textbf{1}_{[\![0,\tau]\!]}=(\mathcal{Y}^{\bar{\alpha}},\mathcal{Z}^{\bar{\alpha}},\mathcal{K}^{\bar{\alpha}},\mathcal{A}^{\bar{\alpha}})\textbf{1}_{[\![0,\tau]\!]}, \,\, \forall \tau \in \mathcal{T}_0, \,\bar\alpha \in \mathscr{A}_\tau^{{\alpha}} \label{eqqq6}.\\
\nonumber
%\end{cases}
\end{align}
where 
$$\delta_s^{t, \alpha}:=\int_t^s \{\beta^{\alpha}_u dW_u+(\lambda^{\alpha}_u-\dfrac{(\beta^{\alpha}_u)^2}{2})du\},$$
with
$$\lambda^\alpha_s:= \dfrac{g(s,\mathcal{Y}_s^{\alpha}, \mathcal{Z}_s^{\alpha})-g(s,Y_s^{\alpha}, \mathcal{Z}_s^{\alpha})}{\mathcal{Y}_s^{\alpha}-Y_s^{\alpha}} \textbf{1}_{\{\mathcal{Y}_s^{\alpha}-Y_s^{\alpha} \neq 0\}};$$
$$ \beta^\alpha_s:= \dfrac{g(s,Y_s^{\alpha}, \mathcal{Z}_s^{\alpha})-g(s,Y_s^{\alpha}, Z_s^{{\alpha}})}{|\mathcal{Z}_s^{\alpha}-Z_s^{\alpha}|^2}(\mathcal{Z}_s^{\alpha}-Z_s^{\alpha}) \textbf{1}_{\{\mathcal{Z}_s^{\alpha}-Z_s^{\alpha} \neq 0\}},$$
and $(Y^{\alpha}, Z^{\alpha},A^{\alpha})$ the solution of the reflected BSDE with driver $g$ and obstacle $\Phi(\cdot,M_\cdot^{\alpha})$.
Moreover, $(\mathcal{Y}^\alpha, \mathcal{Z}^\alpha, \mathcal{A}^\alpha, \mathcal{K}^\alpha)_{\alpha \in \mathscr{A}_0} \in \mathbf{S}_2 \times \mathbf{H}_2 \times (\mathbf{K}_2)^2$ is the unique family satisfying $\eqref{eqqq2}-\eqref{eqqq6}$.
%Moreover, $\mathcal{Y}^\alpha$ is a \textit{maximal} solution, in the sense that for any $(\Tilde{Y}^\alpha, \Tilde{Z}^\alpha, \Tilde{K}^\alpha, \Tilde{A}^\alpha)_{\alpha \in \mathcal{A}_0}$ in $\mathbf{S}_2 \times \mathbf{H}_2 \times \mathbf{K}_2 \times \mathbf{K}_2 $ satisfying the above relations, we have $\Tilde{Y}_t^\alpha \leq \mathcal{Y}_t^\alpha$, for any $t \in [0,T]$ a.s. 

\end{Theorem}
%satisfying 
%\begin{align}
%\sup_{\alpha \in \mathscr{A}_0} ||(\mathcal{Y}^\alpha,\mathcal{Z}^\alpha,\mathcal{K}^\alpha,\mathscr{A}^\alpha) ||_{\mathbf{S}_2 \times \mathbf{H}_2 \times \mathbf{K}_2 \times \mathbf{K}_2}< \infty,
%\end{align}

\begin{proof}
First note that for $(\alpha, \tau) \in \mathscr{A}_0 \times \mathcal{T}_0$, we have $\mathscr{A}_\cdot^{\alpha'}=\mathscr{A}_\cdot^\alpha$ on $[\![0,\tau]\!]$ for $\alpha' \in \mathscr{A}_\tau^\alpha$. The definition of $\mathcal{Y}^\alpha$ implies that $\mathcal{Y}^\alpha \textbf{1}_{[0,\tau]}=\mathcal{Y}^{\alpha'} \textbf{1}_{[0,\tau]}$ for $\alpha' \in \mathscr{A}_\tau^\alpha$. Fix $\tau \in \mathcal{T}_0$ and $\alpha \in \mathscr{A}_0$. By Theorem \ref{aggregator},  we get that the process $\mathcal{Y}^\alpha_\cdot$ is a RCLL $\mathscr{Y}^{g,\alpha}$-submartingale, and therefore we can apply the $\mathscr{Y}^{g,\alpha}$-Doob-Meyer decomposition provided in Theorem \ref{DMRCLL} and obtain the existence of $(\mathcal{Z}^\alpha,\mathcal{A}^\alpha,\mathcal{K}^\alpha) \in \mathbf{H}_2 \times (\mathbf{K}_2)^2$ such that, for $t \in [0,T]$,
\begin{align}
\begin{cases}
\mathcal{Y}^\alpha_t= \Phi(T,M_T^\alpha)+\displaystyle\int_t^Tg(s,\mathcal{Y}_s^\alpha,\mathcal{Z}^\alpha_s)ds+\mathcal{A}_T^\alpha-\mathcal{A}_t^\alpha-\mathcal{K}_T^\alpha+\mathcal{K}_t^\alpha-\displaystyle\int_t^T\mathcal{Z}_s^\alpha dW_s;\\
\mathcal{Y}_t^\alpha \geq \Phi(t,M_t^\alpha)  {\rm \,\, a.s.\,\,}; \nonumber \\
\displaystyle\int_0^T (\mathcal{Y}^\alpha_{s^-}-\Phi(s^-, M_{s^-}^\alpha))d\mathcal{A}_s^\alpha=0;\,\,
d\mathcal{A}_s^\alpha \perp d\mathcal{K}_s^\alpha. \nonumber
\end{cases}
\end{align}
By the uniqueness of the representation of a semimartingale and since the measures $d\mathcal{A}^\alpha$ and $d\mathcal{K}^\alpha$ (resp. $d\mathcal{A}^{\bar{\alpha}}$ and $d\mathcal{K}^{\bar{\alpha}}$, for all $\bar{\alpha} \in \mathscr{A}_\tau^{\alpha}$) are mutually singular, we derive that $(\mathcal{Y}^\alpha,\mathcal{Z}^\alpha,\mathcal{K}^\alpha,\mathcal{A}^\alpha)\textbf{1}_{[\![0,\tau]\!]}=(\mathcal{Y}^{\bar{\alpha}},\mathcal{Z}^{\bar{\alpha}},\mathcal{K}^{\bar{\alpha}},\mathcal{A}^{\bar{\alpha}})\textbf{1}_{[\![0,\tau]\!]}, \,\,  \forall \bar{\alpha} \in \mathscr{A}_\tau^{\alpha}$.
 It remains to show the minimality condition \eqref{eqqq5}.\\
To do so, let us first consider an arbitrary control $\bar{\alpha} \in \mathscr{A}_{\tau}^\alpha$ and $(Y^{\bar{\alpha}}, Z^{\bar{\alpha}}, A^{\bar{\alpha}})$ the solution of the following  reflected BSDE:
\begin{align*}
\begin{cases}
Y_t^{\bar{\alpha}}= \Phi(T,M_T^{\bar{\alpha}})+\displaystyle\int_t^T g(s, Y_s^{\bar{\alpha}}, Z_s^{\bar{\alpha}})ds-\displaystyle\int_t^T Z_s^{\bar{\alpha}}dW_s+A_T^{\bar{\alpha}}-A_t^{\bar{\alpha}};\\
Y_t^{\bar{\alpha}} \geq \Phi(t, M_t^{\bar{\alpha}})   {\rm \,\, a.s.\,\,} \,\, 0 \leq t \leq T; \\
\displaystyle\int_0^T (Y_{s_-}^{\bar{\alpha}}-\Phi(s^{-}, M_{s_-}^{\bar{\alpha}}))dA_s^{\bar{\alpha}}=0.
\end{cases}
\end{align*}
Using a classical linearization procedure, we obtain:
\begin{align}
Y_\tau^{\bar{\alpha}}-\mathcal{Y}^{\bar{\alpha}}_\tau = \mathbb{E}_\tau[\int_\tau^{T} \exp (\delta_s^{\tau, \bar{\alpha}})(dA_s^{\bar{\alpha}}-d\mathcal{A}_s^{\bar{\alpha}}+d\mathcal{K}_s^{\bar{\alpha}})] {\rm \,\, a.s.}
\end{align}
%where $\tau(\bar{\alpha}):= \inf \{t \geq \tau:\,\,\, \mathcal{Y}_t^{\bar{\alpha}} \geq \Phi(t,M_t^\alpha)\}.$ The last inequality holds by definition of the stopping time $\tau(\bar{\alpha})$ and the fact that  $d \mathscr{A}^{\bar{\alpha}}=0$ on $[\![\tau, \tau({\bar{\alpha}})]\!].$ \\
%Now, applying the Holder inequality, we get
%$$\mathbb{E}_\tau\left[(\mathcal{K}^{\bar{\alpha}}_{\tau(\bar{\alpha})}-\mathcal{K}_\tau^{\bar{\alpha}})\right]^3 \leq C \mathbb{E}_\tau\left[(\mathcal{K}^{\bar{\alpha}}_{\tau(\bar{\alpha})}-\mathcal{K}_\tau^{\bar{\alpha}})^2\right](Y_\tau^{\bar{\alpha}}-\mathcal{Y}^{\bar{\alpha}}_\tau) \leq C \eta (Y_\tau^{\bar{\alpha}}-\mathcal{Y}^{\bar{\alpha}}_\tau),$$
%%where $\eta:= \underset{\bar{\alpha \in \mathscr{A}_\tau^\alpha}}{\esssup}\mathbb{E}\left[(\mathcal{K}^{\bar{\alpha}}_{\tau(\bar{\alpha})}-\mathcal{K}_\tau^{\bar{\alpha}})^2\right]$. One can easily show that $\eta$ is a.s. finite.
%Using the fact the process $\mathcal{K}^{\bar{\alpha}}$ is non-decreasing, we get
%$$0 \leq \mathbb{E}[\mathcal{K}_{\tau(\bar{\alpha})}^{\bar{\alpha}}]-\mathcal{K}_\tau^{\bar{\alpha}} \leq C \eta (Y_\tau^{\bar{\alpha}}-\mathcal{Y}^{\bar{\alpha}}_\tau).$$
We take now the $\essinf$ on $\bar{\alpha} \in \mathscr{A}_\tau^\alpha$ and using the definition of the value function $\mathcal{Y}^{\alpha}$ and the fact that $\mathcal{Y}^\alpha \textbf{1}_{[0,\tau]}=\mathcal{Y}^{\bar{\alpha}} \textbf{1}_{[0,\tau]}$ for $\bar{\alpha} \in \mathscr{A}_\tau^\alpha$, the minimality condition follows.

We now show the uniqueness of the family.
Let $(\Tilde{Y}^\alpha,\Tilde{Z}^\alpha,\Tilde{K}^\alpha,\Tilde{A}^\alpha)$ be a solution of \eqref{eqqq2}-\eqref{eqqq6}. Fix $\tau \in \mathcal{T}_0$ and $\bar{\alpha} \in \mathscr{A}_\tau^{\alpha}$ and denote by $(Y^{\bar{\alpha}}, Z^{\bar{\alpha}},A^{\bar{\alpha}})$ the solution of the reflected BSDE with driver $g$ and obstacle $\Phi(\cdot,M_\cdot^{\bar{\alpha}})$.
%Notice that, by using the comparison theorem between BSDEs with \textit{generalized driver} and the characterization of the solution of a reflected BSDE as the solution of an optimal stopping problem, we deduce that
%\begin{align}
%\mathcal{Y}_t^\alpha=\essinf_{\alpha' \in \mathscr{A}_t^\alpha} \esssup_{\theta \in \mathcal{T}_t} \mathcal{E}^g_{t,\theta}[\Phi(\theta,M_\theta^{\alpha'})] \geq \essinf_{\alpha' \in \mathscr{A}_t^\alpha} \esssup_{\theta \in \mathcal{T}_t} \mathcal{E}^{g-dK^{\alpha'}}_{t,\theta}[\Phi(\theta,M_\theta^{\alpha'})]=\essinf_{\alpha' \in \mathscr{A}_t^\alpha} \Tilde{Y}_t^{\alpha'}=\Tilde{Y}_t^{\alpha} {\,\, a.s.\,\,},
%\end{align}
%where the last equality follows by $\eqref{eqqq6}$.
%We now show the fact that $\mathcal{Y}^\alpha$ is maximal. Let us consider an arbitrary solution $(\Tilde{Y}^\alpha,\Tilde{Z}^\alpha,\Tilde{K}^\alpha,\Tilde{A}^\alpha).$ Applying the comparison theorem for reflected BSDEs, we obtain
%$$\mathcal{Y}_\tau^\alpha= \essinf_{\bar{\alpha} \in \mathscr{A}_\tau^\alpha} Y_\tau^{\bar{\alpha}} \geq \Tilde{Y}_\tau^\alpha \text{  a.s. }$$ 
By using the same linearization procedure, we obtain
\begin{align}
Y_\tau^{\bar{\alpha}}-\Tilde{Y}_\tau^{\bar{\alpha}}=\mathbb{E}[\int_{\tau}^T \exp (\delta_s^{\tau, \bar{\alpha}})d(A^{\bar{\alpha}}_s-\Tilde{A}_s^{\bar{\alpha}}+\Tilde{K}_s^{\bar{\alpha}})] {\rm \,\, a.s.}
\end{align}
The minimality condition $\eqref{eqqq5}$, together with $\eqref{eqqq6}$ and the definition of $\mathcal{Y}^\alpha$ imply that $\tilde{Y}_\tau^{\alpha}=\mathcal{Y}_\tau^\alpha$ a.s. By the uniqueness of the representation of a semimartingale and since the measures $d\mathcal{A}^\alpha$ and $d\mathcal{K}^\alpha$ (resp. $d\Tilde{A}^{\bar{\alpha}}$ and $d\Tilde{K}^{\bar{\alpha}}$) are mutually singular, we get the {\color{black}uniqueness }result.
\fproof
\end{proof}

\begin{Remark}
 Since  the process $A^\alpha-\mathcal{A}^\alpha+\mathcal{K}^\alpha$ is in general not non-decreasing, notice that we cannot derive  a formulation only involving $\mathcal{A}^\alpha$, $A^\alpha$ and $\mathcal{K}^\alpha$, as for non-reflected BSDEs with weak terminal condition. We point out that in the case whenever $\Phi=-\infty$ implying no reflection, the processes $\mathcal{A}^\alpha$ and $A^\alpha$ become 0 for all $\alpha \in \mathscr{A}_0$. Hence, the minimality condition is indeed equivalent to 
\begin{align}
\essinf_{\alpha' \in \mathscr{A}_\tau^\alpha} \mathbb{E}_\tau \left[\mathcal{K}_T^{\alpha'}-\mathcal{K}_\tau^{\alpha'}\right] =0 {\rm\,\, a.s.}
\end{align}
corresponding to the minimality condition presented in Bouchard et al. \cite{BER15} for BSDEs with weak terminal condition and in Soner et al. \cite{STZ12} for second order BSDEs.

The need to depart from the "standard" minimality condition has also been pointed out in Matoussi et al.  \cite{MPZ17}, when dealing with second order reflected BSDEs, as well as  in Popier et al. \cite{PZ18}, when dealing with 2BSDEs under a monotonicity condition.
\end{Remark}

%%%%%%%%%%%%%%%%%%%%
%%%%%%%%%%%%%%%%%%%%
%%%%%%%%%%%%%%%%%%%%
%\input{lien-second-ordre.tex}
%%%%%%%%%%%%%%%%%%%%
%%%%%%%%%%%%%%%%%%%%
%%%%%%%%%%%%%%%%%%%%

%%%%%%%%%%%%%%%%%%%%
%%%%%%%%%%%%%%%%%%%%
%%%%%%%%%%%%%%%%%%%%
%\input{exemple_inversion_infsup.tex}
%%%%%%%%%%%%%%%%%%%%
%%%%%%%%%%%%%%%%%%%%
%%%%%%%%%%%%%%%%%%%%

\section{Appendix}\label{Sec4}
\paragraph{\textit{BSDEs with weak constraints} and a related game problem}

In this section, we study a related game problem. We show that, given a threshold process $(M_t^{\alpha})$, the minimal initial process $\mathcal{Y}^\alpha$ corresponds to the value of an optimal stopping problem. More precisely, we provide some conditions under which one can interchange the inf and the sup and deduce the existence of a saddle point. This problem is in general non-trivial, and the additional complexity in our case is due to the presence of the control $\alpha$ in the obstacle $\Phi(t, M_t^\alpha)$.\\

Let $S \in \mathcal{T}_0$ and $\alpha \in \mathscr{A}_0$. Define the \textit{first value function} at time $S$ by
\begin{align}\label{valfunct1}
\overline{\mathcal{Y}}^{\alpha}(S):=\essinf_{{\alpha}^{'} \in \mathscr{A}_{S}^{\alpha}} \esssup_{\tau \in \mathcal{T}_S} \mathcal{E}^{g}_{S,\tau}[\Phi(\tau,M_{\tau}^{\alpha^{'}})].
\end{align}
and \textit{the second value function} at time $S$ by
\begin{align}\label{valfunct2}
\underline{\mathcal{Y}}^{\alpha}(S):= \esssup_{\tau \in \mathcal{T}_S} \essinf_{{\alpha}^{'} \in \mathscr{A}_{S}^{\alpha}} \mathcal{E}^{g}_{S,\tau}[\Phi(\tau,M_{\tau}^{\alpha^{'}})].
\end{align}
By definition, we say that there exists a \textit{value function} at time $S$ for the game problem if  $\overline{\mathcal{Y}}^{\alpha}(S)=\underline{\mathcal{Y}}^{\alpha}(S)$ a.s.

We recall the definition of a $S-$\textit{saddle point}.

\begin{definition}[S-saddle point]\label{def} Let $S \in \mathcal{T}_0$. A pair $(\tau_S^*, \alpha_S^*) \in \mathcal{T}_S \times \mathscr{A}_{S}^{\alpha}$ is called a $S$-\textit{saddle point} if

\begin{itemize}
\item[(i)] $\overline{\mathcal{Y}}^{\alpha}(S)=\underline{\mathcal{Y}}^{\alpha}(S)$\,\, a.s.
\item[(ii)] The essential infimum in \eqref{valfunct1} is attained at $\alpha_S^*$
\item[(iii)] The essential supremum in \eqref{valfunct2} is attained at $\tau_S^*$.
\end{itemize}
\end{definition}

In order to prove the existence of a $S$-saddle point, we need to make the following convexity assumption on the driver.

\begin{Assumption}\label{con} For all $(\lambda, m_1,m_2,t,y_1,y_2,z_1,z_2)\in [0,1]\times [0,1]^2 \times [0,T]\times \mathbf{R}^2 \times (\mathbf{R}^d)^2$,
\begin{align*}
%\Phi(t,\lambda m_1+ (1-\lambda)m_2)&\leq \lambda \Phi(t, m_1)+(1-\lambda)\Phi(t, m_2)\, ,\PB-\text{a.s.}\\
g(t,\lambda y_1+ (1-\lambda)y_2, \lambda z_1+ (1-\lambda)z_2) &\leq \lambda g(t,y_1,z_1)+ (1-\lambda)g(t,y_2,z_2)\, {\rm \,\,a.s.}
\end{align*}
\end{Assumption}

Let us now give the main result of this section. 
\begin{theorem} \label{SSS}
1. Assume that  $g(t,\omega,y,z) \geq 0$, for all $(t,\omega,y, z) \in [0,T] \times \Omega \times \textbf{R} \times \textbf{R}^d$ and suppose that $\Phi$  is non-decreasing with respect to $t$ and convex with respect to $m$. Then the game problem admits a value function, that is
\begin{align}\label{game}
\overline{\mathcal{Y}}^\alpha(S)=\underline{\mathcal{Y}}^\alpha(S) \,\,\, {\rm \,\,a.s.,\,\, for\,\, all\,\,} S \in \mathcal{T}_0.
\end{align}

2. Assume that  $g(t,\omega,y,z) \leq 0$, for all $(t,\omega,y, z) \in [0,T] \times \Omega \times \textbf{R} \times \textbf{R}^d$ and suppose that $\Phi$  is non-increasing with respect to $t$ and concave with respect to $m$. Then the game problem admits a value function, that is
\begin{align}\label{game2}
\overline{\mathcal{Y}}^\alpha(S)=\underline{\mathcal{Y}}^\alpha(S) \,\,\, {\rm \,\, a.s.,\,\, for\,\, all\,\,} S \in \mathcal{T}_0.
\end{align}

3. {\color{black}Under the Assumptions of point 1 and Assumption \ref{con}}, there exists a $S$-saddle point for the game problem $\eqref{valfunct1}-\eqref{valfunct2}$ in the sense of Definition \ref{def}.
\end{theorem}

\begin{proof}
1.  Fix $S \in \mathcal{T}_0$.
First note that 
$$\esssup_{\theta \in \mathcal{T}_S}  \essinf_{\alpha' \in \mathscr{A}_S^\alpha} \mathcal{E}_{S,\theta}^{g}\left[\Phi(\theta,M_\theta^{\alpha'})]\right] \leq \essinf_{\alpha' \in \mathscr{A}_S^\alpha} \esssup_{\theta \in \mathcal{T}_S} \mathcal{E}^{g}_{S,\theta}\left[\Phi(\theta,M_\theta^{\alpha'})\right] {\rm \,\, a.s.} $$

It remains to show the converse inequality.

\noindent Fix $\theta \in \mathcal{T}_S$ and $\alpha' \in \mathscr{A}_S^\alpha$. By the flow property for nonlinear BSDEs, we get
\begin{align*}
\mathcal{E}_{S,T}^{g}[\Phi(T,M_T^{\alpha'})]= \mathcal{E}_{S,\theta}^{g}[\mathcal{E}_{\theta,T}^{g}[\Phi(T,M_T^{\alpha'})]] {\rm \,\, a.s.}
\end{align*}
Applying  the comparison theorem for BSDEs and using the assumption on the driver $g$, we derive
\begin{align}\label{eq1}
\mathcal{E}_{S,\theta}^{g}\left[\mathcal{E}_{\theta,T}^{g}[\Phi(T,M_T^{\alpha'})]\right] \geq \mathcal{E}^{g}_{S,\theta}\left[\mathbb{E}[\Phi(T,M_T^{\alpha'})|\mathscr{F}_\theta]\right] {\rm \,\, a.s.}
\end{align}
The above relation, together with  the properties of the map $\Phi$ and the conditional Jensen inequality implies that 
\begin{align}\label{eq2}
\mathcal{E}^{g}_{S,\theta}\left[\mathbb{E}[\Phi(T,M_T^{\alpha'})|\mathscr{F}_\theta]\right] \geq \mathcal{E}^{g}_{S,\theta}\left[\mathbb{E}[\Phi(\theta,M_T^{\alpha'})|\mathscr{F}_{\theta}]\right] \geq 
\mathcal{E}^{g}_{S,\theta}\left[\Phi(\theta,\mathbb{E}[M_T^{\alpha'}|\mathscr{F}_{\theta}])\right] {\rm \,\, a.s.}
\end{align}
The martingale property of $M^{\alpha'}$ implies that
\begin{align}\label{eq3}
\mathcal{E}^{g}_{S,\theta}\left[\Phi(\theta,\mathbb{E}[M_T^{\alpha'}|\mathscr{F}_{\theta}])\right]=\mathcal{E}^{g}_{S,\theta}\left[\Phi(\theta,M_\theta^{\alpha'})\right] {\rm \,\, a.s.}
\end{align}
Combining \eqref{eq2} and \eqref{eq3}, we get
\begin{align*}
\mathcal{E}_{S,T}^{g}\left[\Phi(T,M_T^{\alpha'})]\right] \geq \mathcal{E}^{g}_{S,\theta}\left[\Phi(\theta,M_\theta^{\alpha'})\right]  {\rm \,\, a.s.}
\end{align*}
By taking first the essential supremum on $\theta \in \mathcal{T}_S$ and then the essential infimum on $\alpha' \in \mathscr{A}_S^\alpha$, it follows that 
\begin{equation}\label{inequa}
\essinf_{\alpha' \in \mathscr{A}_S^\alpha} \mathcal{E}_{S,T}^{g}[\Phi(T,M_T^{\alpha'})] \geq \essinf_{\alpha' \in \mathscr{A}_S^\alpha} \esssup_{\theta \in \mathcal{T}_S} \mathcal{E}^{g}_{S,\theta}\left[\Phi(\theta,M_\theta^{\alpha'})\right] {\rm \,\, a.s.}
\end{equation}
Besides, we clearly have
\begin{align*}
\essinf_{\alpha' \in \mathscr{A}_S^\alpha} \mathcal{E}_{S,T}^{g}[\Phi(T,M_T^{\alpha'})] \leq \esssup_{\theta \in \mathcal{T}_S}  \essinf_{\alpha' \in \mathscr{A}_S^\alpha} \mathcal{E}_{S,\theta}^{g}[\Phi(\theta,M_\theta^{\alpha'})] {\rm \,\, a.s.}
\end{align*}
The last two inequalities allow to derive
$$\esssup_{\theta \in \mathcal{T}_S}  \essinf_{\alpha' \in \mathscr{A}_S^\alpha} \mathcal{E}_{S,\theta}^{g}\left[\Phi(\theta,M_\theta^{\alpha'})]\right] \geq \essinf_{\alpha \in \mathscr{A}_S^\alpha} \esssup_{\theta \in \mathcal{T}_S} \mathcal{E}^{g}_{S,\theta}\left[\Phi(\theta,M_\theta^{\alpha'})\right] {\rm \,\, a.s.} $$
and \reff{game} follows.
%\fproof
\noindent

\vspace{5mm}

3. We now show the existence of a $S$-saddle point, under the additional assumption that $g$ is convex with respect to $(y,z)$, that is, Assumption \ref{con} holds.

We start by proving the existence of an optimal control  for the problem \eqref{valfunct1}. %$\alpha_S^*$
By Lemma \ref{TH} , there exists a sequence  of controls $ (\alpha^n_S)_n$ belonging to  $\mathscr{A}_S^\alpha$ such that 
\begin{equation}\label{downusa}
\overline{\mathcal Y}^{\alpha}(S)=\lim_{n\to\infty} \downarrow \esssup_{\theta \in \mathcal{T}_S}{\mathcal E}^g_{S,\theta}[\Phi(\theta,M^{\alpha^n_S}_\theta)]\,\, {\rm \,a.s. }
%_{\theta}
\end{equation}
As the sequence $(M^{\alpha^n_S}_T)_n$ is bounded in $[0,1]$, one can find sequences of nonnegative real numbers $(\lambda^n_i)_{i\geq n}$ with $\sum_{i\geq n} \lambda^n_i=1$, such that only a finite number of $\lambda^n_i$'s do not vanish, for each $n$, and such that the sequence of convex combinations $(\tilde{M}^n_T)_n$ given by 

\begin{equation*}
\tilde{M}^n_T:=\sum_{i\geq n} \lambda^n_i M^{\alpha^i_S}_T
\end{equation*}
converges a.s. to some $\bar M_T$. By dominated convergence, the convergence holds in $\textbf{L}_2$, in particular $\mathbb{E}[\bar M_T]=m_0$ and the martingale representation theorem gives the existence of a control ${\color{black}{\alpha_S^*}}$ such that $\bar M_T=M^{m_0, {\color{black}{\alpha_S^*}}}_T$. Let us define $\bar M_t:=\mathbf E_t[\bar M_T]=M^{m_0, {\color{black}{\alpha_S^*}}}_t$. %Due to the fact that $(\tilde{M}_T^n)$ and $\bar{M}_T$ are martingales, 
By the definition of $(\tilde{M}_t^n)$ and the fact that $(M^{\alpha^n_S}_t)_n$ are martingales, we obtain that, for all $\theta \in \mathcal{T}_S$, $\tilde{M}_\theta^n= \sum_{i\geq n} \lambda^n_i M^{\alpha^i_S}_\theta$ a.s., in particular, $\tilde{M}_S^n= \sum_{i\geq n} \lambda^n_i M^{\alpha^i_S}_S=M^{\alpha}_S$, because $ (\alpha^n_S)_n$ belongs to  $\mathscr{A}_S^\alpha$. Thus, by the $\textbf{L}_2$ convergence, we have that ${\color{black}{\alpha_S^*}}\in\mathscr{A}_S^\alpha$. \\\\% and $\P^{\widehat\alpha}\in \overline{\mathcal P}$.  \\
%Then, for $\theta\geq \tau$,  we get that $\EB[\tilde{M}^n_T|\mathscr{F}_\theta]\to\EB[M^{\widehat\alpha}_T|\mathscr{F}_\theta]$ in $L_2$ and since $\tilde{M}^n$ and $\widehat M$ are martingales we have $\tilde{M}^n_\theta\to M^{\widehat\alpha}_\theta$ in $L_2$. 
Moreover, since $\Phi$ and $g$ are convex, we have 
\begin{equation*}%\label{convexity}
\sum_{i\geq n} \lambda^n_i {\mathcal E}^g_{\tau,\theta}[\Phi(\theta,M^{\alpha^i_S}_\theta)]\geq {\mathcal E}^g_{\tau,\theta}[\Phi(\theta,\tilde{M}^n_\theta)] {\rm \,\, a.s.}
\end{equation*}
We thus obtain 
%\begin{equation}\label{convexity}
%{\mathcal Y}^n_{\tau,\theta}:=\sum_{i\geq n} \lambda^n_i {\mathcal E}^g_{\tau,\theta}[\Phi(\theta,M^{\alpha^i}_\theta)]\geq {\mathcal E}^g_{\tau,\theta}[\Phi(\theta,\tilde{M}^n_\theta)].
%\end{equation}
\begin{eqnarray}
{\mathcal Y}^{n}(S)&:=&\underset{i \geq n} \sum \lambda_i^n \esssup_{\theta \in \mathcal{T}_S} {\mathcal E}^g_{S,\theta}[\Phi(\theta,M^{\alpha^i_S}_\theta)] \nonumber\\
&\geq& \esssup_{\theta \in \mathcal{T}_S} \left(\sum_{i\geq n} \lambda^n_i {\mathcal E}^g_{S,\theta}[\Phi(\theta,M^{\alpha^i_S}_\theta)]\right)\geq \esssup_{\theta \in \mathcal{T}_S}{\mathcal E}^g_{S,\theta}[\Phi(\theta,\tilde{M}^n_\theta)] {\rm \,\, a.s.}\label{convexity1}
\end{eqnarray}
Then (\ref{downusa}) implies that $\mathcal {Y}^n(S) \to \overline{\mathcal Y}^{\alpha}(S) $ ${\rm } {\rm \,\, a.s.}$ \\
\noindent  Let us now show that 
\begin{equation}\label{eqqs}
\underset{\theta \in \mathcal{T}_S} \esssup\, {\mathcal E}^g_{S,\theta}[\Phi(\theta,\tilde{M}^n_\theta)] \rightarrow \underset{\theta \in \mathcal{T}_S} \esssup\,{\mathcal E}^g_{S,\theta}[\Phi(\theta,\bar{M}_\theta)] {\rm \,\, a.s.}
\end{equation}

%On the other hand, the convergence $\tilde{M}^n_\theta\to M^{\widehat{\alpha}}_\theta$ in $L_2$ combined with the boundedness and a.s. continuity of $\Phi$ implies that $\Phi(\theta,\tilde{M}^n_\theta)\to \Phi(\theta,M^{\widehat{\alpha}}_\theta)$ in $L_2$, after possibly passing to a subsequence. Therefore the convergence ${\mathcal E}^g_{\tau,\theta}[\Phi(\theta,\tilde{M}^n_\theta)]\to {\mathcal E}^g_{\tau,\theta}[\Phi(\theta,M^{\widehat{\alpha}}_\theta)],\,\P-a.s.$ follows by stability property of BSDEs and we obtain $\underset{\theta\geq \tau}{ \esssup}{\mathcal E}^g_{\tau,\theta}[\Phi(\theta,\tilde{M}^n_\theta)]\to \underset{\theta\geq \tau}{ \esssup}{\mathcal E}^g_{\tau,\theta}[\Phi(\theta,M^{\widehat{\alpha}}_\theta)] ,\,\P-a.s.$. Passing to the limit in \eqref{convexity} gives $\mathcal Y_\tau^\alpha \geq  \underset{\theta\geq \tau}{ \esssup}{\mathcal E}^g_{\tau,\theta}[\Phi(\theta,M^{\widehat{\alpha}}_\theta)]$, while the converse holds by definition of $\mathcal Y_\tau^\alpha$.  

The {\it a priori} estimates on BSDEs give:
\begin{align*}
&\left|\underset{\theta \in \mathcal{T}_S}{ \esssup}{\mathcal E}^g_{S,\theta}[\Phi(\theta,\tilde{M}^n_\theta)]-\underset{\theta\in \mathcal{T}_S}{ \esssup}{\mathcal E}^g_{S,\theta}[\Phi(\theta,\bar{M}_\theta)]\right| \leq \underset{\theta \in \mathcal{T}_S}{ \esssup} \left|{\mathcal E}^g_{S,\theta}[\Phi(\theta,\tilde{M}^n_\theta)]-{\mathcal E}^g_{S,\theta}[\Phi(\theta,\bar{M}_\theta)]  \right| \nonumber \\
& \leq C \underset{\theta \in \mathcal{T}_S}{ \esssup}\,\, \mathbb{E}_S\left[\left(\Phi(\theta, \Tilde{M}_\theta^n)-\Phi(\theta, \bar{M}_\theta)\right)^2\right]^{\frac{1}{2}} \leq C \mathbb{E}_S\left[\sup_{0 \leq t \leq T} \left(\Phi(t, \Tilde{M}_t^n)-\Phi(t, \bar{M}_t)\right)^2\right]^{\frac{1}{2}}{\rm \,\, a.s.,}
\end{align*}
with $C$ a constant depending on $T$ and the Lipschitz constant of the driver $g$.

The Doob maximal inequality together with the uniform continuity of $\Phi$ with respect to $t$ and $m$ imply the convergence to $0$, up to a subsequence,  of the RHS term of the above inequality. Hence, we obtain $\eqref{eqqs}$. From \eqref{convexity1} and $\eqref{eqqs}$, we derive that $$\overline{\mathcal Y}^{\alpha}(S)\geq \underset{\theta \in \mathcal{T}_S} \esssup\,{\mathcal E}^g_{S,\theta}[\Phi(\theta,\bar{M}_\theta)] {\rm \,\, a.s.},$$
thus ${\color{black}{\alpha_S^*}}$ is an optimal control by the definition of $\overline{\mathcal Y}^{\alpha}(S)$.
{\color{black}  Furthermore, by \eqref{inequa} and the fact that $\overline{\mathcal Y}^{\alpha}(S)= \underline{\mathcal Y}^{\alpha}(S)$ we deduce that $\tau_S^*=T$ is an optimal stopping time for $\underline{\mathcal Y}^{\alpha}(S)$. Therefore, we conclude that the pair $(\tau_S^*, \alpha_S^*)$ is a $S$-saddle point.}
\fproof
\end{proof}

\begin{Remark}
\begin{itemize}
\item {\color{black}The proof of the point 2 of Theorem \ref{SSS} is omitted because it follows similar ideas as in the proof of point 1 of the same theorem.}
\item
We emphasize that the results {\color{black}of Theorem \ref{SSS} (1 and 2) still hold} under different assumptions on the map $\Phi$. Indeed, in the case of a positive driver $g$, one could consider the function $\Phi$ of the form $\Phi(t,\omega,m)=m+h(X_t)$, with $X$ a submartingale process and $h$ a convex function. In the case of a negative driver $g$, the proof still works for a function $\Phi$ of the form $\Phi(t,\omega,m)=m+h(X_t)$, with $X$ a supermartingale process and $h$ a concave function. 
\end{itemize}
\end{Remark}

We also easily observe that the existence of the value function of the game implies that we have the following representation of the minimal process $\mathcal{Y}^\alpha$.

\begin{Corollary} Fix $\theta \in \mathcal{T}_0$ and $\alpha \in \mathscr{A}_0$. Then,  {\color{black}under the Assumptions of Theorem \ref{SSS} point 1 and Assumption \ref{con}}, $\mathcal{Y}^\alpha_\theta$ corresponds to the value of the following optimal stopping problem
\begin{align}
\mathcal{Y}^\alpha_\theta=\underset{\tau \in \mathcal{T}_\theta} \esssup\, \mathcal{X}_\theta^{\alpha,\tau} {\rm a.s.},
\end{align}
where $\mathcal{X}^{\alpha,\tau}_\theta$ corresponds to the \textit{minimal} $\theta$-initial supersolution of the BSDE with weak terminal condition at time $\tau$.
\end{Corollary}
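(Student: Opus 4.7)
The plan is to read the corollary as a direct consequence of the saddle-point/value result (Theorem \ref{SSS}) combined with a one-stopping-time version of the reformulation established in Lemma \ref{keyres} and Proposition \ref{res}. Concretely, I would first invoke Theorem \ref{SSS} (under its standing assumptions) to exchange the essinf and esssup in the definition of $\mathcal{Y}^{\alpha}_\theta$, namely
\begin{align*}
\mathcal{Y}^{\alpha}_\theta \;=\; \essinf_{\alpha' \in \mathbf{V}_\theta^\alpha}\,\esssup_{\tau \in \mathcal{T}_\theta}\,\mathcal{E}^g_{\theta,\tau}\bigl[\Phi(\tau,M_\tau^{\alpha'})\bigr] \;=\; \esssup_{\tau \in \mathcal{T}_\theta}\,\essinf_{\alpha' \in \mathbf{V}_\theta^\alpha}\,\mathcal{E}^g_{\theta,\tau}\bigl[\Phi(\tau,M_\tau^{\alpha'})\bigr]\,\,{\rm a.s.}
\end{align*}
This already gives the right outer esssup structure, so the task is to recognize the inner essinf as the object $\mathcal{X}^{\alpha,\tau}_\theta$ of the statement.

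Next, for a fixed $\tau \in \mathcal{T}_\theta$, I would identify
\begin{align*}
\mathcal{X}^{\alpha,\tau}_\theta \;=\; \essinf_{\alpha' \in \mathbf{V}_\theta^\alpha}\,\mathcal{E}^g_{\theta,\tau}\bigl[\Phi(\tau,M_\tau^{\alpha'})\bigr]\,\, {\rm a.s.},
\end{align*}
which is the analog for a single horizon $\tau$ of Proposition \ref{res}. The argument is a simplification of the one used in Lemma \ref{keyres}: when the weak constraint only has to hold at one stopping time $\tau$, the requirement $\mathbf{E}_\theta[\Psi(\tau,Y_\tau)]\geq M_\theta^\alpha$ reduces by the martingale representation theorem to the existence of a control $\alpha' \in \mathbf{V}_\theta^\alpha$ with $\Psi(\tau,Y_\tau)\geq M_\tau^{\alpha'}$ a.s., hence $Y_\tau \geq \Phi(\tau,M_\tau^{\alpha'})$ a.s.; the comparison theorem for BSDEs and arbitrariness of the admissible $\alpha'$ yield both inequalities.

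Combining the two steps finishes the proof. The main technical point to watch out for is merely notational/set-theoretic: one must check that the class of controls $\mathbf{V}_\theta^\alpha$ used in the game formulation coincides with the class arising in the definition of the $\theta$-initial supersolutions of the BSDE with weak terminal condition $\mathbf{E}_\theta[\Psi(\tau,Y_\tau)]\geq M_\theta^\alpha$. This is immediate from the definitions set out in Section \ref{characterization}, so no real obstacle is expected; the whole corollary is essentially a packaging statement that rewrites the value-of-the-game identity as an optimal stopping problem over ``European-type'' weak-terminal-condition subproblems.
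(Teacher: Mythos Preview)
Your proposal is correct and follows exactly the approach the paper intends: the corollary is stated right after Theorem \ref{SSS} with the remark that ``the existence of the value function of the game implies'' the representation, and you have simply spelled out the two steps --- the inf/sup interchange from Theorem \ref{SSS} and the identification of the inner essinf with the minimal supersolution of the BSDE with weak terminal condition at the fixed horizon $\tau$ via (the one-stopping-time version of) Lemma \ref{keyres} and Proposition \ref{res}. Your final remark about the control classes $\mathbf{V}_\theta^\alpha$ versus $\mathbf{V}_{\theta,M_\theta^\alpha}$ is also the right point to check and is indeed harmless, since the criterion depends on $\alpha'$ only through its restriction to $]\!]\theta,T]\!]$ and the starting value $M_\theta^\alpha$.
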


@article{peng:99,
	Author = {Peng, S.},
	Journal = {Probab. Theory Relat. Fields},
	Language = {English},
	Number = 4,
	Pages = {473-499},
	Title = {{Monotonic limit theorem of BSDE and nonlinear decomposition theorem of Doob-MeyerF's type.}},
	Volume = 113,
	Year = 1999}
	
	@book{N75,
  title={Discrete-parameter Martingales},
  author={Neveu, J.},
  isbn={9780444107084},
  lccn={74079241},
  series={Mathematical Studies},
  url={https://books.google.fr/books?id=t8cUvgAACAAJ},
  year={1975},
  publisher={North-Holland}
}
	@article {CP2000,
    AUTHOR = {Chen, Zengjing and Peng, Shige},
     TITLE = {A general downcrossing inequality for {$g$}-martingales},
   JOURNAL = {Statist. Probab. Lett.},
  FJOURNAL = {Statistics \& Probability Letters},
    VOLUME = {46},
      YEAR = {2000},
    NUMBER = {2},
     PAGES = {169--175},
      ISSN = {0167-7152},
     CODEN = {SPLTDC},
   MRCLASS = {60H10 (60G44)},
  MRNUMBER = {1748870 (2000m:60068)},
       DOI = {10.1016/S0167-7152(99)00102-9},
       URL = {http://dx.doi.org/10.1016/S0167-7152(99)00102-9},
}
@incollection {DL82,
    AUTHOR = {Dellacherie, C. and Lenglart, E.},
     TITLE = {Sur des probl\`emes de r\'egularisation, de recollement et
              d'interpolation en th\'eorie des processus},
 BOOKTITLE = {Seminar on {P}robability, {XVI}},
    SERIES = {Lecture Notes in Math.},
    VOLUME = {920},
     PAGES = {298--313},
 PUBLISHER = {Springer, Berlin},
      YEAR = {1982},
   MRCLASS = {60G40 (60G07)},
  MRNUMBER = {658692},
MRREVIEWER = {Mich\~A\"{\ }le Mastrangelo-Dehen},
       DOI = {10.1007/BFb0092793},
       URL = {http://dx.doi.org/10.1007/BFb0092793},
}	
@article {LX05,
    AUTHOR = {Lepeltier, J.-P. and Xu, M.},
     TITLE = {Penalization method for reflected backward stochastic
              differential equations with one r.c.l.l. barrier},
   JOURNAL = {Statist. Probab. Lett.},
  FJOURNAL = {Statistics \& Probability Letters},
    VOLUME = {75},
      YEAR = {2005},
    NUMBER = {1},
     PAGES = {58--66},
      ISSN = {0167-7152},
   MRCLASS = {60H10 (60H20)},
  MRNUMBER = {2185610},
MRREVIEWER = {A. I. Dale},
       DOI = {10.1016/j.spl.2005.05.016},
       URL = {http://dx.doi.org/10.1016/j.spl.2005.05.016},
}

 @article{MPZ17,
author = {Anis Matoussi and Dylan Possama{\"\i} and Chao Zhou},
title = {{Corrigendum for  Second-order reflected backward stochastic differential equations  and Second-order BSDEs with general reflection and game options under uncertainty}},
volume = {31},
journal = {The Annals of Applied Probability},
number = {3},
publisher = {Institute of Mathematical Statistics},
pages = {1505 -- 1522},
keywords = {2BSDEs, reflections, Skorokhod condition},
year = {2021},
doi = {10.1214/20-AAP1622},
URL = {https://doi.org/10.1214/20-AAP1622}
}

@article{PZ18,
  title={Second order BSDE under monotonicity condition and liquidation problem under uncertainty},
  author={Popier, A. and Zhou, C.},
  journal = {Ann. Appl. Probab.},
  FJOURNAL = {The Annals of Applied Probability},
    VOLUME = {29},
      YEAR = {2019},
    NUMBER = {3},
     PAGES = {1685--1739},
}
@Article{DM80,
  author ={ C. Dellacherie and P.A. Meyer},
  title ={Probabilit\'es et Potentiel. Chap V-VIII},
  journal ={ Herman, Paris},
  year = {1980},
  OPTkey =   {},
  OPTvolume = {},
  OPTnumber = {},
  pages =     {},
  OPTmonth =     {},
  OPTnote =      {},
  OPTannote =    {}
}
@article{D16,
  title={BSDEs with nonlinear weak terminal condition},
  author={Dumitrescu, Roxana},
  journal={Preprint arXiv:1602.00321},
  year={2016}
}

@article{DEH19,
  title={Mean-field reflected backward stochastic differential equations},
  author={Djehiche, B. and Elie, R. and Hamadene, S.},
  journal={To appear in AAP.Preprint arXiv:1911.06079},
  year={2019}
}

@book {Y95,
    AUTHOR = {Yeh, J.},
     TITLE = {Martingales and stochastic analysis},
    SERIES = {Series on Multivariate Analysis},
    VOLUME = {1},
 PUBLISHER = {World Scientific Publishing Co., Inc., River Edge, NJ},
      YEAR = {1995},
     PAGES = {xiv+501},
      ISBN = {981-02-2477-X},
   MRCLASS = {60-02 (60G44 60H05 60H10)},
  MRNUMBER = {1412800},
MRREVIEWER = {Vigirdas Mackevi\"Aius},
       DOI = {10.1142/9789812779304},
       URL = {http://dx.doi.org/10.1142/9789812779304},
}
 
@article{BEH16, 

    AUTHOR = {Briand, Philippe and Elie, Romuald and Hu, Ying},
     TITLE = {B{SDE}s with mean reflection},
   JOURNAL = {Ann. Appl. Probab.},
  FJOURNAL = {The Annals of Applied Probability},
    VOLUME = {28},
      YEAR = {2018},
    NUMBER = {1},
     PAGES = {482--510},
      ISSN = {1050-5164},
   MRCLASS = {60H10 (91G10)},
  MRNUMBER = {3770882},
       DOI = {10.1214/17-AAP1310},
       URL = {https://doi.org/10.1214/17-AAP1310},
}
	
@article {BV10,
    AUTHOR = {Bouchard, Bruno and Vu, Thanh Nam},
     TITLE = {The obstacle version of the geometric dynamic programming
              principle: application to the pricing of {A}merican options
              under constraints},
   JOURNAL = {Appl. Math. Optim.},
  FJOURNAL = {Applied Mathematics and Optimization},
    VOLUME = {61},
      YEAR = {2010},
    NUMBER = {2},
     PAGES = {235--265},
      ISSN = {0095-4616},
   MRCLASS = {91B25 (90C15 90C39 91G20 93E20)},
  MRNUMBER = {2585143},
MRREVIEWER = {Juan Li},
       DOI = {10.1007/s00245-009-9084-y},
       URL = {http://dx.doi.org/10.1007/s00245-009-9084-y},
}

@article{BBC16,
  title={A backward dual representation for the quantile hedging of Bermudan options},
  author={Bouchard, Bruno and Bouveret, G{\'e}raldine and Chassagneux, Jean-Fran{\c{c}}ois},
  journal={SIAM Journal on Financial Mathematics},
  volume={7},
  number={1},
  pages={215--235},
  year={2016},
  publisher={SIAM}
}

@article{BPT16,
  title={A general Doob-Meyer-Mertens decomposition for g-supermartingale systems},
  author={Bouchard, Bruno and Possama{\"\i}, Dylan and Tan, Xiaolu},
  journal={Electronic Journal of Probability},
  volume={21},
  year={2016},
  publisher={The Institute of Mathematical Statistics and the Bernoulli Society}
}

@article{STZ12,
	Author = {Soner, H. M. and Touzi, N. and Zhang, J.},
	Journal = {Probab. Theory Related Fields},
	Number = {1-2},
	Pages = {149--190},
	Title = {Well-posedness of second order backward {SDE}s},
	Volume = {153},
	Year = {2012}}
	
@article{STZ13,
	Author = {Soner, H. M. and Touzi, N. and Zhang, J.},
	Fjournal = {The Annals of Applied Probability},
	Journal = {Ann. Appl. Probab.},
	Number = {1},
	Pages = {308--347},
	Title = {Dual formulation of second order target problems},
	Volume = {23},
	Year = {2013}}	
@article {Mc63,
    AUTHOR = {McKean, Jr., H. P.},
     TITLE = {A. {S}korohod's stochastic integral equation for a reflecting
              barrier diffusion},
   JOURNAL = {J. Math. Kyoto Univ.},
  FJOURNAL = {Journal of Mathematics of Kyoto University},
    VOLUME = {3},
      YEAR = {1963},
     PAGES = {85--88},
      ISSN = {0023-608X},
   MRCLASS = {60.75 (60.62)},
  MRNUMBER = {0157406 (28 \#640)},
MRREVIEWER = {D. L. Hanson},
}
@article {BET10,
    AUTHOR = {Bouchard, Bruno and Elie, Romuald and Touzi, Nizar},
     TITLE = {Stochastic target problems with controlled loss},
   JOURNAL = {SIAM J. Control Optim.},
  FJOURNAL = {SIAM Journal on Control and Optimization},
    VOLUME = {48},
      YEAR = {2009/10},
    NUMBER = {5},
     PAGES = {3123--3150},
      ISSN = {0363-0129},
     CODEN = {SJCODC},
   MRCLASS = {49L25 (35R60 60J60 91G80 93E20)},
  MRNUMBER = {2599913},
MRREVIEWER = {Monica Motta},
       DOI = {10.1137/08073593X},
       URL = {http://dx.doi.org/10.1137/08073593X},
}
@article {FL99,
    AUTHOR = {F{\"o}llmer, Hans and Leukert, Peter},
     TITLE = {Quantile hedging},
   JOURNAL = {Finance Stoch.},
  FJOURNAL = {Finance and Stochastics},
    VOLUME = {3},
      YEAR = {1999},
    NUMBER = {3},
     PAGES = {251--273},
      ISSN = {0949-2984},
   MRCLASS = {91B30 (60H30 62F03 62P05 91B28)},
  MRNUMBER = {1842286},
MRREVIEWER = {Volkert Paulsen},
       DOI = {10.1007/s007800050062},
       URL = {http://dx.doi.org/10.1007/s007800050062},
}
@article{K95,
	Author = {R. Karandikar},
	Journal = {Stochastic Processes and Their Applications, 57:11-18},
	Title = {On pathwise stochastic integration},
	Year = {1995}}

@article {ST02,
    AUTHOR = {Soner, H. Mete and Touzi, Nizar},
     TITLE = {Stochastic target problems, dynamic programming, and viscosity
              solutions},
   JOURNAL = {SIAM J. Control Optim.},
  FJOURNAL = {SIAM Journal on Control and Optimization},
    VOLUME = {41},
      YEAR = {2002},
    NUMBER = {2},
     PAGES = {404--424},
      ISSN = {0363-0129},
     CODEN = {SJCODC},
   MRCLASS = {49K45 (49J40 49L25 91B28 93E20)},
  MRNUMBER = {1920265},
MRREVIEWER = {Fausto Gozzi},
       DOI = {10.1137/S0363012900378863},
       URL = {http://dx.doi.org/10.1137/S0363012900378863},
}
@article {STZ11,
    AUTHOR = {Soner, H. M. and Touzi, N. and Zhang, J.},
     TITLE = {Quasi-sure stochastic analysis through aggregation},
   JOURNAL = {Electron. J. Probab.},
    VOLUME = {16},
      YEAR = {2011},
      number ={67},
     PAGES = {1844--1879},
}

@article {ST03,
    AUTHOR = {Soner, H. Mete and Touzi, Nizar},
     TITLE = {A stochastic representation for mean curvature type geometric flows},
   JOURNAL = {Ann. Probab.},
  FJOURNAL = {The Annals of Probability},
    VOLUME = {31},
      YEAR = {2003},
    NUMBER = {3},
     PAGES = {1145--1165},
      ISSN = {0091-1798},
   MRCLASS = {60J60 (35K55 49Q20 58J65 60H10 74N99 91B28)},
  MRNUMBER = {1988466},
MRREVIEWER = {Olivier Raimond},
       DOI = {10.1214/aop/1055425773},
       URL = {http://dx.doi.org/10.1214/aop/1055425773},
}

@article {ElkChM80,
    AUTHOR = {Chaleyat-Maurel, M. and El Karoui, N. and Marchal, B.},
     TITLE = {R\'eflexion discontinue et syst\`emes stochastiques},
   JOURNAL = {Ann. Probab.},
  FJOURNAL = {The Annals of Probability},
    VOLUME = {8},
      YEAR = {1980},
    NUMBER = {6},
     PAGES = {1049--1067},
      ISSN = {0091-1798},
     CODEN = {APBYAE},
   MRCLASS = {60H20 (60G44 60J50)},
  MRNUMBER = {602379 (82k:60135)},
MRREVIEWER = {M. H. A. Davis},
       URL =
              {http://links.jstor.org/sici?sici=0091-1798(198012)8:6<1049:RDESS>2.0.CO;2-I&origin=MSN},
}
@incollection {P97,
    AUTHOR = {Peng, S.},
     TITLE = {Backward {SDE} and related {$g$}-expectation},
 BOOKTITLE = {Backward stochastic differential equations ({P}aris,
              1995--1996)},
    SERIES = {Pitman Res. Notes Math. Ser.},
    VOLUME = {364},
     PAGES = {141--159},
 PUBLISHER = {Longman, Harlow},
      YEAR = {1997},
   MRCLASS = {60A10 (60H10 91B16)},
  MRNUMBER = {1752680},
}

@article{KKPPQ97,
	Author = {N. {\uppercase{e}l} Karoui and C. Kapoudjian and E. Pardoux and S. Peng and M.C. Quenez},
	Journal = {Annals of Probability},
	Number = {25},
	Pages = {702-737},
	Title = {Reflected Solutions of Backward SDE and Related Obstacle Problems for PDE},
	Volume = {},
	Year = {1997}
}
@article{K82,
	Author = {H. Kunita },
	Journal = {Ecole d'\'{e}t\'{e} de Probabilit\'{e} de Saint-Flour,  Lect. Notes Math.},
	Number = {},
	Pages = {144-303},
	Title = {Stochastic differential equations and stochastic flows of diffeomorphisms},
	Volume = {1097},
	Year = {1982}
}

@article{BPTZ15,
author = {Bruno Bouchard and Dylan Possama{\"\i} and Xiaolu Tan},

title = {{A general Doob-Meyer-Mertens decomposition for g-supermartingale systems}},
volume = {21},
journal = {Electronic Journal of Probability},
number = {},
publisher = {Institute of Mathematical Statistics and Bernoulli Society},
pages = {1 -- 21},
keywords = {Backward stochastic differential equations, Doob-Meyer decomposition, non-linear expectations},
year = {2016},
doi = {10.1214/16-EJP4527},
URL = {https://doi.org/10.1214/16-EJP4527}
}

@article{M72,
  title={Th{\'e}orie des processus stochastiques g{\'e}n{\'e}raux applications aux surmartingales},
  author={Jean Mertens},
  journal={Zeitschrift f{\"u}r Wahrscheinlichkeitstheorie und Verwandte Gebiete},
  year={1972},
  volume={22},
  pages={45-68}
}

@article{PP92,
	Author = {E. Pardoux and S. Peng },
	Journal = {Lect. Notes Control Inf. Sci.},
	Number = {},
	Pages = {200-217 },
	Title = {Backward stochastic differential equations and quasilinear parabolic partial equations},
	Volume = {176},
	Year = {1992}
}

@article {PP90,
    AUTHOR = {Pardoux, {\'E}. and Peng, S. G.},
     TITLE = {Adapted solution of a backward stochastic differential
              equation},
   JOURNAL = {Systems Control Lett.},
  FJOURNAL = {Systems \& Control Letters},
    VOLUME = {14},
      YEAR = {1990},
    NUMBER = {1},
     PAGES = {55--61},
      ISSN = {0167-6911},
     CODEN = {SCLEDC},
   MRCLASS = {60H10 (60H20 93E03)},
  MRNUMBER = {1037747 (91e:60171)},
MRREVIEWER = {Kiyomasa Narita},
       DOI = {10.1016/0167-6911(90)90082-6},
       URL = {http://dx.doi.org/10.1016/0167-6911(90)90082-6},
}

@article{PTZ15,
	Author = {Possama{\"\i}, D. and Tan, X. and Zhou, C.},
	Date = {2015},
	Date-Modified = {2015-08-22 10:26:07 +0000},
	Journal = {	arXiv:1510.08439},
	Title = {Stochastic control for a class of non-linear stochastic kernels and applications}}
@article {MPZ15,
    AUTHOR = {Matoussi, Anis and Possama{\"{\i}}, Dylan and Zhou, Chao},
     TITLE = {Robust utility maximization in nondominated models with
              2{BSDE}: the uncertain volatility model},
   JOURNAL = {Math. Finance},
  FJOURNAL = {Mathematical Finance. An International Journal of Mathematics,
              Statistics and Financial Economics},
    VOLUME = {25},
      YEAR = {2015},
    NUMBER = {2},
     PAGES = {258--287},
      ISSN = {0960-1627},
   MRCLASS = {91G80},
  MRNUMBER = {3321250},
MRREVIEWER = {Wanyi Chen},
       DOI = {10.1111/mafi.12031},
       URL = {http://dx.doi.org/10.1111/mafi.12031},
}
@article {CK96,
    AUTHOR = {Cvitani{\'c}, Jak{\v{s}}a and Karatzas, Ioannis},
     TITLE = {Backward stochastic differential equations with reflection and
              {D}ynkin games},
   JOURNAL = {Ann. Probab.},
  FJOURNAL = {The Annals of Probability},
    VOLUME = {24},
      YEAR = {1996},
    NUMBER = {4},
     PAGES = {2024--2056},
      ISSN = {0091-1798},
     CODEN = {APBYAE},
   MRCLASS = {93E05 (34F05 60G40 60H10)},
  MRNUMBER = {1415239},
MRREVIEWER = {{\L}. Stettner},
       DOI = {10.1214/aop/1041903216},
       URL = {http://dx.doi.org/10.1214/aop/1041903216},
}

@article {Peng1991,
    AUTHOR = {Peng, Shi Ge},
     TITLE = {Probabilistic interpretation for systems of quasilinear
              parabolic partial differential equations},
   JOURNAL = {Stochastics Stochastics Rep.},
  FJOURNAL = {Stochastics and Stochastics Reports},
    VOLUME = {37},
      YEAR = {1991},
    NUMBER = {1-2},
     PAGES = {61--74},
      ISSN = {1045-1129},
     CODEN = {STOCBS},
   MRCLASS = {35R60 (35K50 60H30 93E20)},
  MRNUMBER = {1149116 (93a:35159)},
MRREVIEWER = {Krystyna Twardowska},
}

@article {BET09,
    AUTHOR = {Bouchard, Bruno and Elie, Romuald and Touzi, Nizar},
     TITLE = {Stochastic target problems with controlled loss},
   JOURNAL = {SIAM J. Control Optim.},
  FJOURNAL = {SIAM Journal on Control and Optimization},
    VOLUME = {48},
      YEAR = {2009/10},
    NUMBER = {5},
     PAGES = {3123--3150},
      ISSN = {0363-0129},
     CODEN = {SJCODC},
   MRCLASS = {49L25 (35R60 60J60 91G80 93E20)},
  MRNUMBER = {2599913 (2011e:49039)},
MRREVIEWER = {Monica Motta},
       DOI = {10.1137/08073593X},
       URL = {http://dx.doi.org/10.1137/08073593X},
}

@ARTICLE {TA09,
title = {Robust efficient hedging for American options: The existence of worst case probability measures},
author = {Trevi{\~n}o Aguilar, Erick},
year = {2009},
journal = {Statistics & Risk Modeling},
volume = {27},
number = {1},
pages = {1-23},
}

@article{BER15,
 ISSN = {00911798},
 URL = {http://www.jstor.org/stable/24519154},
 abstract = {We introduce a new class of backward stochastic differential equations in which the T-terminal value YT of the solution (Y, Z) is not fixed as a random variable, but only satisfies a weak constraint of the form E[Ψ(YT)] ≥ m, for some (possibly random) nondecreasing map Ψ and some threshold m. We name them BSDEs with weak terminal condition and obtain a representation of the minimal time t-values Yt such that (Y, Z) is a supersolution of the BSDE with weak terminal condition. It provides a non-Markovian BSDE formulation of the PDE characterization obtained for Markovian stochastic target problems under controlled loss in Bouchard, Elie and Touzi [SIAM J. Control Optim. 48 (2009/10) 3123–3150]. We then study the main properties of this minimal value. In particular, we analyze its continuity and convexity with respect to the m-parameter appearing in the weak terminal condition, and show how it can be related to a dual optimal control problem in Meyer form. These last properties generalize to a non-Markovian framework previous results on quantile hedging and hedging under loss constraints obtained in Föllmer and Leukert [Finance Stoch. 3 (1999) 251–273; Finance Stoch. 4 (2000) 117–146], and in Bouchard, Elie and Touzi (2009/10).},
 author = {Bruno Bouchard and Romuald Elie and Antony Réveillac},
 journal = {The Annals of Probability},
 number = {2},
 pages = {572--604},
 publisher = {Institute of Mathematical Statistics},
 title = {BSDES WITH WEAK TERMINAL CONDITION},
 volume = {43},
 year = {2015}
}
@article {TA16,
    AUTHOR = {Trevi{\~n}o Aguilar, Erick},
     TITLE = {Partial hedging of {A}merican options in discrete time and
              complete markets: convex duality and optimal {M}arkov
              policies},
   JOURNAL = {Bol. Soc. Mat. Mex. (3)},
  FJOURNAL = {Bolet\'\i n de la Sociedad Matem\'atica Mexicana. Third
              Series},
    VOLUME = {22},
      YEAR = {2016},
    NUMBER = {1},
     PAGES = {281--308},
      ISSN = {1405-213X},
   MRCLASS = {91G20 (60H30 60J05 90C15 90C25 91G80)},
  MRNUMBER = {3473762},
       DOI = {10.1007/s40590-015-0070-x},
       URL = {http://dx.doi.org/10.1007/s40590-015-0070-x},
}

@article{BEM18,
  title={Regularity of BSDEs with a convex constraint on the gains-process},
  author={Bouchard, Bruno and Elie, Romuald and Moreau, Ludovic},
  journal={Bernoulli},
  volume={24},
  number={3},
  pages={1613--1635},
  year={2018},
  publisher={Bernoulli Society for Mathematical Statistics and Probability}
}
@article{PX10,
author = {Shige Peng and Mingyu Xu},
title = {{Reflected BSDE with a constraint and its applications in an incomplete market}},
volume = {16},
journal = {Bernoulli},
number = {3},
publisher = {Bernoulli Society for Mathematical Statistics and Probability},
pages = {614 -- 640},
keywords = {American options in an incomplete market, backward stochastic differential equation with a constraint, reflected backward stochastic differential equation},
year = {2010},
doi = {10.3150/09-BEJ227},
URL = {https://doi.org/10.3150/09-BEJ227}
}
@article{DHK13,
author = {Samuel Drapeau and Gregor Heyne and Michael Kupper},
title = {{Minimal supersolutions of convex BSDEs}},
volume = {41},
journal = {The Annals of Probability},
number = {6},
publisher = {Institute of Mathematical Statistics},
pages = {3973 -- 4001},
keywords = {nonlinear expectations, Supermartingales, Supersolutions of backward stochastic differential equations},
year = {2013},
doi = {10.1214/13-AOP834},
URL = {https://doi.org/10.1214/13-AOP834}
}
@article{HKM14,
author = {Gregor Heyne and Michael Kupper and Christoph Mainberger},
title = {{Minimal supersolutions of BSDEs with lower semicontinuous generators}},
volume = {50},
journal = {Annales de l'Institut Henri Poincaré, Probabilités et Statistiques},
number = {2},
publisher = {Institut Henri Poincaré},
pages = {524 -- 538},
keywords = {Semimartingale convergence, Supersolutions of backward stochastic differential equations},
year = {2014},
doi = {10.1214/12-AIHP523},
URL = {https://doi.org/10.1214/12-AIHP523}
}
@article{KLIMSIAK15,
title = {Reflected BSDEs on filtered probability spaces},
journal = {Stochastic Processes and their Applications},
volume = {125},
number = {11},
pages = {4204-4241},
year = {2015},
issn = {0304-4149},
doi = {https://doi.org/10.1016/j.spa.2015.06.006},
url = {https://www.sciencedirect.com/science/article/pii/S0304414915001556},
author = {Tomasz Klimsiak},
keywords = {Reflected BSDE, General filtration,  data},
abstract = {We study the problem of existence and uniqueness of solutions of backward stochastic differential equations with two reflecting irregular barriers, Lp data and generators satisfying weak integrability conditions. We deal with equations on general filtered probability spaces. In case the generator does not depend on the z variable, we first consider the case p=1 and we only assume that the underlying filtration satisfies the usual conditions of right-continuity and completeness. Additional integrability properties of solutions are established if pâ(1,2] and the filtration is quasi-continuous. In case the generator depends on z, we assume that p=2, the filtration satisfies the usual conditions and additionally that it is separable. Our results apply for instance to Markov-type reflected backward equations driven by general Hunt processes.}
}
@article {HTA13,
    AUTHOR = {P{\'e}rez Hern{\'a}ndez, Leonel and Trevi{\~n}o Aguilar,
              Erick},
     TITLE = {\frac{â¢}{â¢}},
   JOURNAL = {Bol. Soc. Mat. Mexicana (3)},
  FJOURNAL = {Sociedad Matem\'atica Mexicana. Bolet\'\i n. Tercera Serie},
    VOLUME = {19},
      YEAR = {2013},
    NUMBER = {2},
     PAGES = {237--253},
      ISSN = {1405-213X},
   MRCLASS = {91B30 (60H30)},
  MRNUMBER = {3183995},
MRREVIEWER = {Hong Miao},
}
@article {BHZ15,
    AUTHOR = {Bayraktar, Erhan and Huang, Yu-Jui and Zhou, Zhou},
     TITLE = {On hedging {A}merican options under model uncertainty},
   JOURNAL = {SIAM J. Financial Math.},
  FJOURNAL = {SIAM Journal on Financial Mathematics},
    VOLUME = {6},
      YEAR = {2015},
    NUMBER = {1},
     PAGES = {425--447},
      ISSN = {1945-497X},
   MRCLASS = {91G20 (49L20 60G40 60G42 91G80 93E20)},
  MRNUMBER = {3356981},
MRREVIEWER = {Jos{\'e} Fajardo},
       DOI = {10.1137/140961869},
       URL = {http://dx.doi.org/10.1137/140961869},
}
@article {NZ15,
    AUTHOR = {Nutz, Marcel and Zhang, Jianfeng},
     TITLE = {Optimal stopping under adverse nonlinear expectation and
              related games},
   JOURNAL = {Ann. Appl. Probab.},
  FJOURNAL = {The Annals of Applied Probability},
    VOLUME = {25},
      YEAR = {2015},
    NUMBER = {5},
     PAGES = {2503--2534},
      ISSN = {1050-5164},
   MRCLASS = {60G40 (91A15 91A60 91G20)},
  MRNUMBER = {3375882},
MRREVIEWER = {Michael Ludkovski},
       DOI = {10.1214/14-AAP1054},
       URL = {http://dx.doi.org/10.1214/14-AAP1054},
}
	
@article {FL00,
    AUTHOR = {F{\"o}llmer, Hans and Leukert, Peter},
     TITLE = {Efficient hedging: cost versus shortfall risk},
   JOURNAL = {Finance Stoch.},
  FJOURNAL = {Finance and Stochastics},
    VOLUME = {4},
      YEAR = {2000},
    NUMBER = {2},
     PAGES = {117--146},
      ISSN = {0949-2984},
   MRCLASS = {91B30},
  MRNUMBER = {1780323 (2001f:91054)},
MRREVIEWER = {Thomas W. Epps},
       DOI = {10.1007/s007800050008},
       URL = {http://dx.doi.org/10.1007/s007800050008},
}
@article {FL99,
    AUTHOR = {F{\"o}llmer, Hans and Leukert, Peter},
     TITLE = {Quantile hedging},
   JOURNAL = {Finance Stoch.},
  FJOURNAL = {Finance and Stochastics},
    VOLUME = {3},
      YEAR = {1999},
    NUMBER = {3},
     PAGES = {251--273},
      ISSN = {0949-2984},
   MRCLASS = {91B30 (60H30 62F03 62P05 91B28)},
  MRNUMBER = {1842286 (2002g:91096)},
MRREVIEWER = {Volkert Paulsen},
       DOI = {10.1007/s007800050062},
       URL = {http://dx.doi.org/10.1007/s007800050062},
}
@article {DK07,
    AUTHOR = {Dolinsky, Yan and Kifer, Yuri},
     TITLE = {Hedging with risk for game options in discrete time},
   JOURNAL = {Stochastics},
  FJOURNAL = {Stochastics. An International Journal of Probability and
              Stochastic Processes},
    VOLUME = {79},
      YEAR = {2007},
    NUMBER = {1-2},
     PAGES = {169--195},
      ISSN = {1744-2508},
   MRCLASS = {60G40 (91A05 91B28 91B30)},
  MRNUMBER = {2290404},
       DOI = {10.1080/17442500601097784},
       URL = {http://dx.doi.org/10.1080/17442500601097784},
}
@article {PH07,
    AUTHOR = {P\'erez-Hern\'andez, Leonel},
     TITLE = {On the existence of an efficient hedge for an {A}merican
              contingent claim within a discrete time market},
   JOURNAL = {Quant. Finance},
  FJOURNAL = {Quantitative Finance},
    VOLUME = {7},
      YEAR = {2007},
    NUMBER = {5},
     PAGES = {547--551},
      ISSN = {1469-7688},
   MRCLASS = {91B28 (60G40)},
  MRNUMBER = {2358918},
MRREVIEWER = {Michael Ludkovski},
       DOI = {10.1080/14697680601158700},
       URL = {http://dx.doi.org/10.1080/14697680601158700},
}	
@article {M11,
    AUTHOR = {Mulinacci, Sabrina},
     TITLE = {The efficient hedging problem for {A}merican options},
   JOURNAL = {Finance Stoch.},
  FJOURNAL = {Finance and Stochastics},
    VOLUME = {15},
      YEAR = {2011},
    NUMBER = {2},
     PAGES = {365--397},
      ISSN = {0949-2984},
   MRCLASS = {91G20 (49L20 60G40 60H30 93E20)},
  MRNUMBER = {2800220},
MRREVIEWER = {Ingo Fahrner},
       DOI = {10.1007/s00780-010-0151-7},
       URL = {http://dx.doi.org/10.1007/s00780-010-0151-7},
}				

@article {M11,
    AUTHOR = {Moreau, Ludovic},
     TITLE = {Stochastic target problems with controlled loss in jump
              diffusion models},
   JOURNAL = {SIAM J. Control Optim.},
  FJOURNAL = {SIAM Journal on Control and Optimization},
    VOLUME = {49},
      YEAR = {2011},
    NUMBER = {6},
     PAGES = {2577--2607},
      ISSN = {0363-0129},
     CODEN = {SJCODC},
   MRCLASS = {49L25 (35K55 60J75 93E20)},
  MRNUMBER = {2873197},
MRREVIEWER = {Pavel Pakshin},
       DOI = {10.1137/100802268},
       URL = {http://dx.doi.org/10.1137/100802268},
}
@article {PX05,
    AUTHOR = {Peng, Shige and Xu, Mingyu},
     TITLE = {The smallest {$g$}-supermartingale and reflected {BSDE} with
              single and double {$L^2$} obstacles},
   JOURNAL = {Ann. Inst. H. Poincar\'e Probab. Statist.},
  FJOURNAL = {Annales de l'Institut Henri Poincar\'e. Probabilit\'es et
              Statistiques},
    VOLUME = {41},
      YEAR = {2005},
    NUMBER = {3},
     PAGES = {605--630},
      ISSN = {0246-0203},
     CODEN = {AHPBAR},
   MRCLASS = {60G40 (60H10 60H30 60H99)},
  MRNUMBER = {2139035},
MRREVIEWER = {Sa{\"{\i}}d Hamadene},
       DOI = {10.1016/j.anihpb.2004.12.002},
       URL = {http://dx.doi.org/10.1016/j.anihpb.2004.12.002},
}
@article {DQS16,
    AUTHOR = {Dumitrescu, Roxana and Quenez, Marie-Claire and Sulem, Agn\`es},
     TITLE = {A weak dynamic programming principle for combined optimal
              stopping/stochastic control with {$\mathcal{E}^f$}-expectations},
   JOURNAL = {SIAM J. Control Optim.},
  FJOURNAL = {SIAM Journal on Control and Optimization},
    VOLUME = {54},
      YEAR = {2016},
    NUMBER = {4},
     PAGES = {2090--2115},
      ISSN = {0363-0129},
   MRCLASS = {60H10 (47N10 49L20 93E20)},
  MRNUMBER = {3539885},
       DOI = {10.1137/15M1027012},
       URL = {https://doi.org/10.1137/15M1027012},
}
@article {GIOOQ17,
    AUTHOR = {Grigorova, Miryana and Imkeller, Peter and
              Ouknine, Youssef and Quenez, Marie-Claire},
     TITLE = {Doubly Reflected BSDEs and Ef-Dynkin games: beyond
the right-continuous case},
   JOURNAL = {Electron. J. Probab.},
  FJOURNAL = {Electron. J. Probab.},
    VOLUME = {23},
      YEAR = {2018},
    NUMBER = {122},
     PAGES = {1-38},
      ISSN = {},
   MRCLASS = {},
  MRNUMBER = {},
       DOI = {},
       URL = {hhttps://doi.org/10.1214/18-EJP225},
}

@article {GIOOQ171,
    AUTHOR = {Grigorova, Miryana and Imkeller, Peter and Offen, Elias and
              Ouknine, Youssef and Quenez, Marie-Claire},
     TITLE = {Reflected BSDEs when the obstacle is not right-continuous and optimal stopping},
   JOURNAL = {Ann. Appl. Probab.},
  FJOURNAL = {Ann. Appl. Probab.},
    VOLUME = {27},
      YEAR = {2017},
    NUMBER = {5},
     PAGES = {3153-3188},
      ISSN = {},
   MRCLASS = {},
  MRNUMBER = {},
       DOI = {10.1214/17-AAP1278},
       URL = {},
}
\bibliographystyle{acm}
\bibliography{WRBSDE}

%
%\small
%\begin{thebibliography}{15}
%
%
%\bibitem{BET} Bouchard B.,  Elie R., Touzi N., \textit{Stochastic target}
%\bibitem{BER} Bouchard B., Elie R.,  Reveillac A., \textit{BSDEs with weak terminal condition}
%\bibitem{BET} Bouchard B.,  Possamai D., Tan X., \textit{A Doob-Meyer-Mertens decomposition for $g$-supermartingale systems}, 2015.
%\bibitem{D} Dumitrescu R., \textit{BSDEs with nonlinear weak terminal condition}, 2016, arxiv.
%
%\bibitem{HP} Hamadène S. and Popier A., \textit{$L^p$-solution for reflected backward stochastic differential equations}, Stochastics and Dynamics, 12(2):1150016.
%
%\end{thebibliography}

\end{document}